\documentclass[11pt,letterpaper,twoside,reqno,nosumlimits]{amsart}

\allowdisplaybreaks

\synctex=1

\usepackage[usenames,dvipsnames]{xcolor}
\usepackage{fancyhdr}
\usepackage{amsmath,amsfonts,amsbsy,amsgen,amscd,mathrsfs,amssymb,amsthm}
\usepackage{subfig}
\usepackage{url}

\usepackage{mathtools}

\mathtoolsset{showonlyrefs}

\usepackage[font=small,margin=0.25in,labelfont={sc},labelsep={colon}]{caption}

\usepackage{tikz}
\usepackage{microtype}
\usepackage{enumitem}

\definecolor{dark-gray}{gray}{0.3}
\definecolor{dkgray}{rgb}{.4,.4,.4}
\definecolor{dkblue}{rgb}{0,0,.5}
\definecolor{medblue}{rgb}{0,0,.75}
\definecolor{rust}{rgb}{0.5,0.1,0.1}

\usepackage[colorlinks=true]{hyperref}

\hypersetup{urlcolor=rust}
\hypersetup{citecolor=dkblue}
\hypersetup{linkcolor=dkblue}

%\usepackage{bibtex}

%\usepackage[
%style=alphabetic, citestyle=alphabetic,
%sorting=nyt, sortcites=false,
%autopunct=true, autolang=hyphen,
%firstinits=true,
%hyperref=true,
%abbreviate=false,
%backref=false,
%backend=bibtex
%]{biblatex}
%\defbibheading{bibempty}{\section*{References}}
%\usepackage[numbers,sort]{natbib}

\usepackage{setspace}

\usepackage{graphicx}
\usepackage{booktabs,longtable,tabu} 
\setlength{\tabulinesep}{1mm}
\usepackage{multirow} 

\usepackage{float}

\usepackage[full]{textcomp}

\usepackage[scaled=.98,sups,osf]{XCharter}
\usepackage[scaled=1.04,varqu,varl]{inconsolata}
\usepackage[type1]{cabin}
\usepackage[charter,vvarbb,scaled=1.07]{newtxmath}
\usepackage[cal=boondoxo]{mathalfa}
\linespread{1.04}

\usepackage[T1]{fontenc}

\usepackage{bm}

\graphicspath{{figures/}}

\newtheorem{bigthm}{Theorem}

\newtheorem{theorem}{Theorem}[section]
\newtheorem{lemma}[theorem]{Lemma}

\newtheorem{proposition}[theorem]{Proposition}
\newtheorem{fact}[theorem]{Fact}

\newtheorem{corollary}[theorem]{Corollary}

\theoremstyle{definition}

\newtheorem{remark}[theorem]{Remark}

\numberwithin{equation}{section} 
\numberwithin{figure}{section}
\numberwithin{table}{section}

\floatstyle{plaintop}
\newfloat{recipe}{thp}{lor}
\floatname{recipe}{Recipe}
\numberwithin{recipe}{section}

\providecommand{\mathbold}[1]{\bm{#1}}

\renewcommand{\phi}{\varphi}

\newcommand{\eps}{\varepsilon}

\newcommand{\cnst}[1]{\mathrm{#1}} 
\newcommand{\econst}{\mathrm{e}}

\newcommand{\Id}{\mathbf{I}}

\providecommand{\mathbbm}{\mathbb} 

\newcommand{\R}{\mathbbm{R}}

\newcommand{\CC}{\mathbbm{C}}
\newcommand{\N}{\mathbbm{N}}

\newcommand{\M}{\mathbbm{M}}

\newcommand{\abs}[1]{\left\vert {#1} \right\vert}

\newcommand{\Prob}[1]{\mathbbm{P}\left\{{#1}\right\}}

\newcommand{\Expect}{\operatorname{\mathbbm{E}}}

\newcommand{\vct}[1]{\mathbold{#1}}
\newcommand{\mtx}[1]{\mathbold{#1}}

\newcommand{\trace}{\operatorname{tr}}

\newcommand{\psdle}{\preccurlyeq}
\newcommand{\psdge}{\succcurlyeq}

\newcommand{\norm}[1]{\left\Vert {#1} \right\Vert}

\newcommand{\triplenorm}[1]{{\left\vert\kern-0.25ex\left\vert\kern-0.25ex\left\vert #1
    \right\vert\kern-0.25ex\right\vert\kern-0.25ex\right\vert}}

\newcommand{\mF}{\mathcal{F}}

\newcommand{\E}{\Expect}

\evensidemargin=0in
\oddsidemargin=0in
\textwidth=6.5in
\topmargin=-0.33in
\headheight=0.25in
\textheight=9in

\begin{document}

\title{Matrix Concentration for Products}
\author[Huang et al.]{De Huang, Jonathan Niles-Weed, Joel A.~Tropp, and Rachel Ward}
\thanks{The authors gratefully acknowledge the funding for this work.
DH was supported under NSF grant DMS-1613861.
JNW and RW were supported in part by the Institute for Advanced Study,
where some of this research was conducted.
JAT was supported under ONR Awards N00014-17-1-2146 and N00014-18-1-2363.
RW also received support from AFOSR MURI Award N00014-17-S-F006.}
\date{4 March 2020}

\begin{abstract}
This paper develops nonasymptotic growth and concentration bounds for a product of independent random matrices.
These results sharpen and generalize recent work of Henriksen--Ward, and they are similar in spirit to the results of Ahlswede--Winter and of Tropp
for a sum of independent random matrices.
The argument relies on the uniform smoothness properties of the Schatten trace classes.
\end{abstract}
\maketitle

\section{Motivation}

Products of random matrices arise in many contemporary applications in the mathematics
of data science.  For instance, they describe the evolution of stochastic linear
dynamical systems, which include popular stochastic algorithms for optimization such as Oja's algorithm for streaming principal component analysis \cite{Oja82:Simplified-Neuron} and the randomized Kaczmarz method for solving linear systems~\cite{SV09:Randomized-Kaczmarz}.
To understand the detailed behavior of these algorithms, such as the rate of
convergence, we may seek out methods for studying a product of random matrices.

Unfortunately, the tools currently available in the literature
are poorly adapted to these circumstances.
Indeed, an instantiation of a stochastic optimization algorithm
involves a finite product of finite-dimensional matrices,
often with a particular structure (e.g., low-rank perturbations of the identity).
But most existing theoretical results are limit laws that require the number
of factors in the product or the dimension of the factors to
tend to infinity.  Furthermore, strong assumptions
on the random matrices (e.g., independent and identically distributed entries) are usually required.

This paper offers some new tools for studying random matrix products
that arise from stochastic optimization algorithms and related problems.
The research is inspired by the recent paper~\cite{henriksen2019concentration} of Henriksen and Ward.
Our hope is to replicate the successful program for studying sums of random matrices,
implemented in the works~\cite{AW02:Strong-Converse,Oli09:Concentration-Adjacency,tropp2011freedman,Tro12,Tro15:Introduction-Matrix,Tro16:Expected-Norm}.
In particular, we seek to develop methods that are flexible, easy to use, and powerful~\cite{Tro18:Second-Order-Matrix}.
We also aspire to use transparent theoretical arguments that can be adapted easily to new situations.

\section{Contributions}

To motivate our work, we start with an elementary concentration
inequality for a product of independent random numbers.
We will generalize this bound, and others, to the matrix setting.

\subsection{Context: A Product of Random Numbers Near $1$}
\label{sec:product-scalars}

Consider an independent family $\{ X_1, X_2, \dots \} \subset \R$
of bounded random variables that satisfy
$$
\E X_i = \mu
\quad\text{and}\quad
\abs{ X_i - \mu }^2 \leq b^2 \quad\text{almost surely.}
$$
Form a product of random perturbations of $1$, and compute its mean:
$$
Z_n := \prod_{i=1}^n \left( 1 + \frac{X_i}{n} \right)
	\quad\text{and}\quad
	\E Z_n = \left(1 + \frac{\mu}{n}\right)^n = \econst^{\mu} \cdot ( 1 - O(n^{-1}) ).
$$
We anticipate that the random product $Z_n$ concentrates
around its expectation $\E Z_n \approx \econst^{\mu}$.

To check this surmise, we can use standard methods from scalar concentration theory.
For $s > 0$,
$$
\begin{aligned}
\Prob{ Z_n \geq (1 + s) \, \econst^{\mu} }
	&= \Prob{ \prod_{i=1}^n \left(1 + \frac{X_i}{n}\right) \geq (1 + s) \,\econst^{\mu} } \\
	&\leq \Prob{ \exp\left( \frac{1}{n} \sum_{i=1}^n X_i \right) \geq (1 + s)\, \econst^{\mu} } \\
	&= \Prob{ \frac{1}{n} \sum_{i=1}^n (X_i - \E X_i) \geq \log(1 + s) }. 
\end{aligned}
$$
The inequality follows from the numerical fact $1 + a \leq \econst^a$, valid for $a \in \R$.
Hoeffding's inequality furnishes the bound
\begin{equation} \label{eq:growth_inf}
\Prob{ Z_n \geq (1 + s)\, \econst^{\mu} }
	\leq \exp\left( \frac{-n \log^2(1+s)}{2b^2} \right).
\end{equation}
At the small scale $s \leq \econst$, in which case $\log(1 + s) \geq s / \econst$,
the growth bound~\eqref{eq:growth_inf} implies a subgaussian tail behavior:
\begin{equation}\label{eq:concentration_inf}
\Prob{ Z_n - \E Z_n \geq t\, \econst^{\mu}} \leq \Prob{ Z_n - \econst^{\mu} \geq t\,\econst^{\mu} }
	\leq \exp\left( \frac{-nt^2}{2\econst^2 b^2} \right)
	\quad\text{for $t \leq \econst$.}
\end{equation}
A similar inequality holds for the lower tail.

\subsection{A Product of Random Perturbations of the Identity}
\label{sec:product-matrices-intro}

We might hope that products of random matrices exhibit a similar behavior.
Consider an independent family $\{ \mtx{X}_1, \dots, \mtx{X}_n \} \subset \M_d$
of $d \times d$ matrices that satisfy
\begin{equation} \label{eqn:mtx-hypothesis-intro}
\Expect \mtx{X}_i = \mtx{A}
\quad\text{and}\quad
\norm{ \mtx{X}_i - \E \mtx{X}_i }^2 \leq b^2
\quad\text{almost surely}.
\end{equation}
Here are elsewhere, $\norm{ \cdot }$ is the spectral norm, that is, the $\ell_2$ operator norm.
Form a product of random perturbations of the identity and compute its mean:
\begin{equation} \label{eqn:mtx-product-intro}
\mtx{Z}_n = \left( \Id + \frac{\mtx{X}_n}{n} \right) \cdots \left( \Id + \frac{\mtx{X}_1}{n} \right)
\quad\text{and}\quad
\E \mtx{Z}_n = \left( \Id + \frac{\mtx{A}}{n} \right)^n \approx \econst^{\mtx{A}}.
\end{equation}
Is it true that the spectral norm $\norm{ \mtx{Z}_n }$ is proportional to $\econst^\mu$,
where $\mu = \norm{\mtx{A}}$?
Does the random product $\mtx{Z}_n$ concentrate near its mean $\E \mtx{Z}_n$?

These speculations are correct.  Moreover, we can obtain bounds that parallel
the scalar inequalities announced in the last subsection.  Here is one particular
result that follows from our analysis.

\begin{bigthm}[Products of Perturbations of the Identity---Special case] 
\label{thm:products-intro}
Consider an independent family $\{ \mtx{X}_1, \dots, \mtx{X}_n \} \subset \M_d$ of random matrices
that satisfy the hypotheses~\eqref{eqn:mtx-hypothesis-intro}.  Define $\mu := \norm{\mtx{A}}$.
The matrix product $\mtx{Z}_n$ introduced in~\eqref{eqn:mtx-product-intro} satisfies the bounds
\begin{align}
\Prob{ \norm{\mtx{Z}_n} \geq (1+s) \, \econst^{\mu} }
	&\leq d \cdot \exp\left( \frac{-n \log^2 (1+s)}{2b^2} \right)
	&&\text{when $\log(1+s) \geq 2b^2/n$};
	\label{eqt:int1} \\
\Prob{ \norm{\mtx{Z}_n - \E \mtx{Z}_n} \geq t \econst^{\mu} }
	&\leq (d + \econst) \cdot \exp\left(\frac{-n t^2}{2\econst^2 b^2}\right)
	&&\text{when $t \leq \econst$.}
	\label{eqt:int2}
\end{align}
\end{bigthm}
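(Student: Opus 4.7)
The plan is to mimic the scalar argument of \S\ref{sec:product-scalars} by tracking Schatten $p$-norm moments of the random product and then optimizing over $p$. The key structural input is the uniform smoothness of the Schatten $p$-classes for $p \geq 2$: a Ball--Carlen--Lieb-type inequality, lifted to the Bochner space $L^p(\Omega; S_p)$, states that for any matrix $\mtx{M}$ and any conditionally mean-zero random matrix $\mtx{Y}$,
\begin{equation}
\bigl(\E \norm{\mtx{M}+\mtx{Y}}_{S_p}^p\bigr)^{2/p} \leq \norm{\mtx{M}}_{S_p}^2 + (p-1) \bigl(\E \norm{\mtx{Y}}_{S_p}^p\bigr)^{2/p}.
\label{eq:BCL}
\end{equation}
This is the only structural ingredient the argument needs, and it must be invoked in its $p$-th power form so that all $p$-th moments can be tracked simultaneously; the cruder second-moment consequence would give only polynomial tails.

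For the growth bound \eqref{eqt:int1}, decompose $\mtx{Z}_i = (\Id + \mtx{A}/n)\mtx{Z}_{i-1} + n^{-1}(\mtx{X}_i - \mtx{A})\mtx{Z}_{i-1}$ and condition on $\mtx{X}_1, \dots, \mtx{X}_{i-1}$: the first summand is deterministic and the second has conditional mean zero. Submultiplicativity $\norm{\mtx{B}\mtx{C}}_{S_p} \leq \norm{\mtx{B}} \norm{\mtx{C}}_{S_p}$, together with $\norm{\Id + \mtx{A}/n} \leq 1+\mu/n$ and $\norm{\mtx{X}_i - \mtx{A}} \leq b$, reduces \eqref{eq:BCL} to
\[
\bigl(\E_i \norm{\mtx{Z}_i}_{S_p}^p\bigr)^{2/p} \leq \bigl((1+\mu/n)^2 + (p-1)b^2/n^2\bigr) \norm{\mtx{Z}_{i-1}}_{S_p}^2 .
\]
Raising to the $p/2$ power and iterating in total expectation from $\norm{\Id}_{S_p}^p = d$, then using $(1+\mu/n)^2 + (p-1)b^2/n^2 \leq \exp(2\mu/n + (p-1)b^2/n^2)$, yields $\E \norm{\mtx{Z}_n}_{S_p}^p \leq d \exp\bigl(p\mu + p(p-1)b^2/(2n)\bigr)$. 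Markov together with $\norm{\mtx{Z}_n} \leq \norm{\mtx{Z}_n}_{S_p}$ then gives $\Prob{\norm{\mtx{Z}_n} \geq (1+s)\econst^\mu} \leq d \exp\bigl(p(p-1)b^2/(2n) - p\log(1+s)\bigr)$. The choice $p = n\log(1+s)/b^2$ is admissible (i.e., $p \geq 2$) exactly under the hypothesis $\log(1+s) \geq 2b^2/n$, and it makes the exponent equal to $-n\log^2(1+s)/(2b^2) - \tfrac{1}{2}\log(1+s)$, which proves \eqref{eqt:int1}.

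For the concentration bound \eqref{eqt:int2}, introduce $\mtx{W}_i := \mtx{Z}_i - \E \mtx{Z}_i$; since $\mtx{W}_i = (\Id + \mtx{A}/n)\mtx{W}_{i-1} + n^{-1}(\mtx{X}_i - \mtx{A})\mtx{Z}_{i-1}$, the sequence $\mtx{W}_n = \sum_i \Delta_i$ is a matrix martingale with differences $\Delta_i = n^{-1}(\Id + \mtx{A}/n)^{n-i}(\mtx{X}_i - \mtx{A})\mtx{Z}_{i-1}$. A noncommutative Burkholder-type square-function estimate (obtained by iterating \eqref{eq:BCL} along the martingale filtration) controls $(\E \norm{\mtx{W}_n}_{S_p}^p)^{1/p}$ in terms of the predictable quadratic variation; combining with $\norm{\Delta_i}_{S_p} \leq (b/n)(1+\mu/n)^{n-i} \norm{\mtx{Z}_{i-1}}_{S_p}$ and the growth estimate just proved gives, after telescoping,
\[
(\E \norm{\mtx{W}_n}_{S_p}^p)^{1/p} \lesssim \sqrt{p-1}\cdot (b/\sqrt{n})\cdot d^{1/p}\cdot \econst^{\mu + (p-1)b^2/(2n)} .
\]
Markov with $u = t\econst^\mu$ and the near-optimal choice $p \asymp 1 + nt^2/(\econst^2 b^2)$---admissible when $t \gtrsim \econst b/\sqrt{n}$---produces the subgaussian tail $d \exp(-nt^2/(2\econst^2 b^2))$, and the complementary small-$t$ regime is absorbed into the extra $\econst$ in the prefactor $(d+\econst)$. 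The main obstacle throughout is securing \eqref{eq:BCL} in exactly the right $p$-th power form, and then deriving from it a square-function estimate with dimension factor $d^{1/p}$ rather than the loose $d^{2/p}$ obtained by symmetrically bounding the row and column variances; once those noncommutative tools are in hand, both bounds follow by routine iteration and a single optimization over $p$.
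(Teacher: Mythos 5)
Your proposal is correct and follows essentially the same route as the paper: the same mean-plus-fluctuation decomposition, the same sharp uniform-smoothness (Ball--Carlen--Lieb/Ricard--Xu) inequality applied in $L_p(S_p)$, and the same Markov-plus-optimize-over-$p$ step, with your choices of $p$ and the trivial small-$t$ regime matching the paper's treatment in Corollaries~\ref{cor:concentration} and~\ref{cor:expectation_Oja}. Your martingale square-function phrasing for the centered product is just the unrolled form of the paper's recursion (cf.~\eqref{eqn:mtx-martingale} and the proof of~\eqref{eq:master_norm2}); the aside about row/column variances is unnecessary, since the $d^{1/p}$ factor comes simply from $\norm{\Id}_p$.
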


\noindent
Theorem~\ref{thm:products-intro} follows from Corollary~\ref{cor:expectation_Oja}.

As compared with the scalar bounds~\eqref{eq:growth_inf} and~\eqref{eq:concentration_inf},
the results in Theorem~\ref{thm:products-intro} feature an additional dimensional factor $d$
in front of the exponential.  This term leads to a dependency of $\log d$ in the bounds for
products of random matrices.  Otherwise, everything is the same, including the constants.

\subsection{Proof Strategy}

How might one establish a result like Theorem~\ref{thm:products-intro}?
The derivation in Section~\ref{sec:product-scalars} is valid only for products of random scalars.
We cannot even begin to make this argument for matrices because the exponential
of a sum of matrices generally does not equal the product of the exponentials.

In this paper, we take a completely different approach.  The key is to observe that
multiplying a random product $\mtx{Z} \in \M_d$ by a statistically independent factor $\mtx{Y} \in \M_d$
creates a predictable change plus a random perturbation:
$$
\mtx{Y} \mtx{Z}
	= (\E \mtx{Y}) \mtx{Z} + (\mtx{Y} - \E \mtx{Y}) \mtx{Z}.
$$
Since the second term has zero mean, conditional on $\mtx{Z}$,
we can exploit this orthogonality property to estimate the size
of the product:
$$
\begin{aligned}
\E \norm{ \mtx{Y} \mtx{Z} }_2^2
	&= \E \norm{ (\E \mtx{Y}) \mtx{Z} }_2^2 + \E \norm{ (\mtx{Y} - \E \mtx{Y}) \mtx{Z} }_2^2 \\
	&\leq \big( \norm{ \E \mtx{Y} }^2 + \E \norm{ \mtx{Y} - \E \mtx{Y} }^2 \big)\big( \E \norm{\mtx{Z}}_2^2 \big)
	=:  (1 + v)\, m \cdot \big( \E \norm{\mtx{Z}}_2^2 \big)
\end{aligned}
$$
The notation $\norm{\cdot}_2$ refers to the Schatten $2$-norm, also known as the Frobenius norm.
The last step introduces data about the random matrix $\mtx{Y}$: the mean $m = \norm{ \E \mtx{Y} }$
and the relative variance $v = \E \norm{\mtx{Y} - \E \mtx{Y}}^2 / \norm{ \E \mtx{Y} }^2$.
We can apply the same argument recursively to decompose the matrix $\mtx{Z}$
into its own factors.

The approach in the last paragraph depends on the fact
that $\norm{\cdot}_2$ is the norm induced by the trace inner product.
To undertake the same action for the spectral norm $\norm{\cdot}$,
we first need to approximate the spectral norm by the Schatten $p$-norm
for $p \approx \log d$.  Then we can invoke a remarkable geometric property
of the Schatten $p$-norm, called \emph{uniform smoothness}, 
as a substitute for the orthogonality law.
See the paper~\cite{naor2012banach} for an
introduction to this circle of ideas.
Section~\ref{sec:unif-smoothness} executes this method.

\subsection{Additional Results}

We establish a family of norm inequalities for products of random matrices.
The main result, Theorem~\ref{thm:pnorm}, gives a bound for the moments of a
Schatten $p$-norm of a random product and a centered random product.  From this fact,
we derive expectation bounds, tail bounds, and matrix concentration inequalities.
Many of these results hold under weaker assumptions than Theorem~\ref{thm:products-intro},
addressing cases where the matrices have different means or are unbounded.

To give a better indication of what we can prove, let us give an informal
presentation of one of our main results, Corollary~\ref{cor:expectation}.
The statement concerns a general product $\mtx{Z}_n = \mtx{Y}_n \cdots \mtx{Y}_1$ of independent
random matrices of dimension $d$.  Abbreviating $p = 1 + 2 \log d$, we have the inequality 
$$
\E \norm{ \mtx{Z}_n - \E \mtx{Z}_n } \leq \econst \sqrt{pv} \prod_{i=1}^n \norm{ \E \mtx{Y}_i }
\quad\text{when}\quad
v := \sum_{i=1}^n \frac{\E \norm{ \mtx{Y}_i - \E\mtx{Y}_i }^2 }{ \norm{ \E \mtx{Y}_i }^2 } \leq \frac{1}{p}.
$$
We can interpret $v$ as the accumulated relative variance in the product.

For example, in the setting of Theorem~\ref{thm:products-intro}, the quantity $v = O(b^2 / n)$.
It follows that
\begin{equation} \label{eqn:E-product-intro}
\E \norm{ \mtx{Z}_n - \E \mtx{Z}_n } = O\left( \sqrt{\frac{pb^2}{n}} \norm{ \E \mtx{Z}_n } \right)\,.
\end{equation}
In particular, $\norm{ \mtx{Z}_n }$ is much closer to $\econst^{\mu}$ than
to the worst-case bound $\econst^{b}$.

\subsection{Roadmap}

We continue with an overview of related work in Section~\ref{sec:related}.
Section~\ref{sec:unif-smoothness} presents background results from
matrix theory and high-dimensional probability.  We establish our
main results for general matrix products in Section~\ref{sec:main-results}.
Afterward, Section~\ref{sec:perturbations} draws corollaries for
a product of perturbations of the identity.  Finally, we describe
some refinements and extensions in Section~\ref{sec:extensions}.

\section{Related Work}
\label{sec:related}

Products of random matrices have been studied for decades,
primarily within the fields of ergodic theory, control theory,
random matrix theory, and free probability.  More recently,
applied mathematicians have developed results that are tailored
to problems arising in data science.
Almost all prior work is either asymptotic in the length of the
product or asymptotic in the dimension of the matrices.
This section contains an overview of these inquiries.

\subsection{Direct Connections}

The most immediate precedent for our research is the recent paper of
Henriksen and Ward~\cite{henriksen2019concentration}.  They were
motivated by the problem of understanding streaming algorithms
for covariance estimation.
Their work gives, perhaps, the first explicit nonasymptotic bounds for a
somewhat general product of random matrices with fixed dimension.  The argument
is based on the matrix Bernstein inequality and a combinatorial fact about set partitions.

Henriksen and Ward focus on the setting of Theorem~\ref{thm:products-intro},
and they establish a bound of the form
$$
\Expect \norm{ \mtx{Z}_n - \E \mtx{Z}_n } \leq \frac{b \econst^b}{\sqrt{n}} \cdot \mathrm{polylog}(n,d).
$$
In contrast, our new result~\eqref{eqn:E-product-intro} replaces the
worst-case factor $\econst^b$ with the more typical value $\econst^{\mu}$.
We are also able to relax several of the assumptions in~\cite{henriksen2019concentration}.

Also in the setting of Theorem~\ref{thm:products-intro}, several works obtain results on the asymptotic behavior of $\mtx{Z}_n$.
Berger~\cite{Ber84} establishes, via a semigroup argument based on the Chernoff product formula, that $\mtx{Z}_n \to \econst^{\mtx A}$ in probability as $n \to \infty$.
Emme and Hubert~\cite{emme2017limit} recently obtained a refinement of this result: motivated by a problem in ergodic theory, they show that $\mtx{Z}_n \to \econst^{\mtx{A}}$ as $n \to \infty$ under the sole assumptions that $ \sum_{i=1}^n \mtx{X}_i/n \to \mtx A$ and $\sum_{i=1}^n \|\mtx{X}_i\|/n < \infty$.
Their argument expands the product and computes the limit of the
$k$th order term using an induction.
Neither approach readily yields nonasymptotic bounds.

\subsection{Other Recent Applications}

Some applied work on random matrix products has been driven by the empirical
observation that stochastic gradient descent converges faster when
the gradient approximations are sampled \emph{without} replacement, rather
than sampled \emph{with} replacement.  Some papers that investigate this
question from the point of view of (nonasymptotic) matrix inequalities
include~\cite{RR12:Beneath-Valley,IKW16:Arithmetic-Geometric,AJZ17:Noncommutative-Versions}.
This specific problem has been solved by G{\"u}rb{\"uz}balaban et al.~\cite{GOP19:Random-Reshuffling}
using optimization theory. However, none of these results directly address the questions at hand.

Researchers studying randomly initialized deep neural networks have also
developed theoretical analysis for products of random matrices;
see~\cite{hanin2018products,Yan19:Scaling-Limits}.
These results involve operations on matrices with independent entries,
and they focus on the large-matrix limit.

\subsection{Ergodic Theory and Control Theory}

Products of random matrices describe the evolution of a linear stochastic
dynamical system.  For this reason, they have been a subject of perennial
interest within the literatures on ergodic theory and on control theory.  For
the most part, this research is concerned with properties of the asymptotics
of infinite products of matrices (of fixed size).  Let us give a few more details.

Consider a finite family $\mathcal{A} = \{ \mtx{A}_1, \dots, \mtx{A}_s \} \subset \M_d$
of fixed matrices.  Construct a random matrix $\mtx{X} \in \M_d$ with
the distribution
$$
\Prob{ \mtx{X} = \mtx{A}_i } = \frac{1}{s}
\quad\text{for each $i = 1, \dots, s$.}
$$
The \emph{Lyapunov exponent} of the set $\mathcal{A}$ is the quantity
$$
\lambda(\mathcal{A}) := \lim_{n \to \infty} \frac{1}{n} \log \norm{ \mtx{X}_n \cdots \mtx{X}_1 }
	\quad\text{where $\mtx{X}_i \sim \mtx{X}$ iid.}
$$
The Furstenberg--Kesten theorem~\cite{FK60:Products-Random} establishes
that $\lambda(\mathcal{A})$ exists almost surely,
but approximating $\lambda(\mathcal{A})$ is algorithmically
undecidable~\cite[Thm.~2]{TB97:Lyapunov-Exponent}.  As a consequence,
we must be pessimistic about finding a completely satisfactory solution
to the matrix concentration problem for products.

To learn more about Lyapunov exponents and to find additional references,
see the paper~\cite{AP19:Lyapunov-Exponent} for work in control theory
and the paper~\cite{Wil17:Lyapunov-Exponents} for work in ergodic theory.
Another major application of random products is to study the asymptotic
behavior of a random walk on a group;
we refer the reader to~\cite{Led01:Some-Asymptotic,Fur02:Random-Walks,BQ16:Random-Walks}
for more information.

\subsection{Random Matrix Theory and Free Probability}

Products of random matrices have also been considered
within random matrix theory and free probability.
This connection is natural, but 
matrix products have received somewhat less attention
than other kinds of random matrix models.  In these
contexts, it is common to study a product of a small
number of matrices (two or three, say) in the limit
as the dimension of the matrices grows.

Bai and Silverstein~\cite[Chap.~4]{BS10:Spectral-Analysis}
present a limit law for the sequence of products
of a random matrix with iid entries and a random matrix
whose spectral distribution has a deterministic limit.
This theorem is motivated by a statistical application,
multivariate analysis of variance.  Note, however,
that convergence of the spectral distribution does
not determine the limit of the spectral norm.

Free probability gives a complete description of the
spectral distribution of a product of two freely
independent elements as the ``multiplicative free convolution''
of the spectral distributions of the factors.
The connection to random matrix theory stems
from the fact that a family of ``adequately random'' matrices becomes
freely independent in the limit as the dimension of the
matrices tends to infinity.  See the book of
Nica \& Speicher~\cite{NS06:Lectures-Combinatorics}
for a digestible introduction; some other good treatments
include~\cite{RE08:Polynomial-Method,Shl19:Random-Matrices,Spe19:Lecture-Notes}.
Free probability has significant applications in wireless communications~\cite{TV04:Random-Matrix}.

For highly structured random matrices (invariant ensembles), it may be possible
to obtain more detailed formulas for products.
See~\cite{kieburg2019products,dartois2019schwinger}
for some recent work in this direction.

\section{Random Matrix Inequalities via Uniform Smoothness}
\label{sec:unif-smoothness}

To analyze products of random matrices, we exploit classic
methods that were developed to study the evolution of a
martingale taking values in a uniformly smooth Banach space.
These ideas are relevant for us because the matrix Schatten
classes (with power $2 \leq p < \infty$) enjoy a remarkable
uniform smoothness property.

In this section, we outline the required background
from matrix analysis and high-dimensional probability.
Naor's tutorial paper~\cite{naor2012banach} serves
as a model for our presentation, and it contains a
more general treatment.
See Section~\ref{sec:unif-smooth-history} for additional
discussion about the history of these ideas.

\subsection{Notation and Background}

We work in the complex field $\CC$; identical results hold
for the real field $\R$.  We often use the infix notation for
the minimum ($\wedge$) and the maximum ($\vee$) of two real numbers.

The operator $\mathbbm{P}$ computes the probability on an event.
The operator $\E$ computes the expectation of a random variable.
Subscripts denote partial expectation; for example, $\E_{Z}$ is
the expectation over the randomness in $Z$.  Nonlinear
functions, such as powers, bind before the expectation.

The linear space $\CC^{d \times r}$ contains all $d \times r$
matrices with complex entries.
The algebra $\M_d$ consists of all $d \times d$ matrices with
complex entries.  We use the standard definitions of scalar
multiplication, matrix addition, matrix multiplication,
and the adjoint (i.e., conjugate transpose).
Any statement about matrices that is not qualified with
specific dimensions holds for all matrices with compatible dimensions.
Nonlinear functions, such as matrix powers, bind before the trace.
The matrix absolute value $\abs{ \mtx{A} } := (\mtx{A}^* \mtx{A})^{1/2}$,
where $(\cdot)^{1/2}$ is the positive-semidefinite square root
of a positive-semidefinite matrix.

We write $\norm{\cdot}$ for the spectral norm on matrices;
the spectral norm coincides with the maximum singular value,
and it is also known as the $\ell_2$ operator norm.
For each $p \geq 1$, the symbol $\norm{\cdot}_p$ refers to the Schatten $p$-norm
which returns the $\ell_p$ norm of the singular values of its argument.
The symbol $S_p$ refers to a linear space of matrices (of fixed dimension),
equipped with the Schatten $p$-norm.

For parameters $p, q \geq 1$, we define the $L_q(S_p)$ norm of a random matrix $\mtx{X}$ 
as 
$$
\triplenorm{ \mtx{X} }_{p,q} := \norm{ \mtx{X} }_{L_q(S_p)} := \big( \E \norm{\mtx{X}}_p^q \big)^{1/q}.
$$
The $L_q(S_p)$ norm is an operator ideal norm, in the sense that
\begin{equation} \label{eqn:operator-ideal}
\triplenorm{ \mtx{AX} }_{p,q} \leq \norm{\mtx{A}} \cdot \triplenorm{\mtx{X}}_{p,q}
	\quad\text{for fixed $\mtx{A}$ and random $\mtx{X}$.}
\end{equation}
This statement follows instantly from the analogous property of the Schatten $p$-norm.

We sometimes use the following simple inequalities for the moments
of a random matrix $\mtx{X}$:
\begin{equation} \label{eqn:moment-method}
\E\|\mtx{X}\| \leq \inf_{p\geq 1} \ \E\|\mtx{X}\|_p = \inf_{p, q \geq 1} \ \triplenorm{ \mtx{X} }_{p,q}.
\end{equation}
The equality follows from Lyapunov's inequality, combined with the fact that $\triplenorm{ \mtx{X} }_{p, 1} = \E \|\mtx X\|_p$ for all $p \geq 1$.

\subsection{Uniform Smoothness for Matrices}

Uniform smoothness
\footnote{More precisely, we are considering uniformly smooth
spaces whose modulus of smoothness has power type $2$.}
is a property of a normed space that describes
how much the norm of a point changes under symmetric perturbation.
Since the Schatten-2 space $S_2$ is an inner-product space,
the parallelogram law gives an exact description of this
phenomenon:
$$
\frac{1}{2} \left[ \norm{\mtx{X} + \mtx{Y}}_2^2 + \norm{\mtx{X} - \mtx{Y}}_2^2 \right]
	= \norm{\mtx{X}}_2^2 + \norm{\mtx{Y}}_2^2.
$$
Remarkably, in other Schatten classes, the parallelogram law
is replaced by an inequality.

\begin{fact}[Uniform Smoothness for Schatten Classes] \label{fact:bcl}
Let $\mtx{A}, \mtx{B}$ be matrices of the same size.  For $p \geq 2$,
\begin{equation} \label{eqn:2smooth}
\left[ \frac{1}{2} \left(\norm{ \mtx{A} + \mtx{B} }_p^p + \norm{\mtx{A} - \mtx{B} }_p^p \right) \right]^{2/p}
	\leq \norm{ \mtx{A} }_{p}^2 + \cnst{C}_p \norm{\mtx{B}}_p^2.
\end{equation}
The optimal constant $\cnst{C}_p := p - 1$.  The inequality is reversed
when $1 \leq p \leq 2$.
\end{fact}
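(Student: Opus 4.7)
The plan is to follow the strategy introduced by Ball, Carlen, and Lieb, which reduces the matrix inequality to a scalar two-point inequality together with a noncommutative trace estimate. The overall structure is: (i) prove the inequality for commuting (scalar) variables by calculus; (ii) lift the scalar inequality to matrices by interpolating $\|A + tB\|_p^p$ as a function of a real parameter and controlling its second derivative via a trace inequality; (iii) integrate to recover the full statement. The case $1 \leq p \leq 2$ is obtained by a duality argument using that the dual of $S_p$ is $S_{p'}$ with $1/p + 1/p' = 1$.

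For the scalar step, I would show that for $p \geq 2$ and $t \in [0,1]$,
$$
\left(\tfrac{1}{2}\bigl[(1+t)^p + (1-t)^p\bigr]\right)^{2/p} \leq 1 + (p-1) t^2.
$$
By homogeneity this reduces any pair $(a,b)$ of nonneg reals (with $a \geq b$) to the single-variable form above. Denote the left side by $F(t)$ and the right by $G(t)$. One checks $F(0) = G(0) = 1$, $F'(0) = G'(0) = 0$, and that $F''(t) \leq G''(t) = 2(p-1)$ throughout $[0,1]$, by expanding $F''$ in closed form and bounding the resulting expression. Two integrations give the claim.

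The heart of the argument is the noncommutative extension. Setting $\phi(t) := \tfrac{1}{2}(\|A + tB\|_p^p + \|A - tB\|_p^p)$, I would observe $\phi(0) = \|A\|_p^p$ and $\phi'(0) = 0$ by parity. The key is a second-order trace inequality, proved using Lieb's concavity theorem (or, equivalently, the operator-convexity of $x \mapsto x^p$ for $p \geq 2$ applied to the matrix $|A \pm tB|^2$), which yields
$$
\phi''(t) \leq p(p-1) \, \bigl\| \, |A + tB|^{(p-2)/2} B \, \bigr\|_2^2 + (\text{symmetric term}),
$$
and hence a bound of the form $\phi''(t) \leq C \, \|\,|A|+t|B|\,\|_p^{p-2} \cdot \|B\|_p^2$ via a noncommutative Hölder inequality applied to the $S_{p/(p-2)}$--$S_{p/2}$ duality. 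Integrating this bound twice from $0$ to $1$ and then applying the scalar inequality from the previous step to the resulting expression yields $\phi(1)^{2/p} \leq \|A\|_p^2 + (p-1)\|B\|_p^2$.

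The main obstacle is the noncommutativity of $A$ and $B$: the scalar Taylor expansion uses only the spectrum of $A$, but for matrices the cross terms in the expansion of $|A \pm tB|^p$ involve products that do not commute with $A$, so naive pointwise spectral bounds fail. Handling this cleanly requires the right trace inequality — classically, one invokes the Ando--Lieb concavity of $(X,Y) \mapsto \trace X^{1-s} Y^s$ or an equivalent formulation — to control the second derivative of $\phi$ in a way that respects the $S_p$ geometry. A secondary difficulty is sharpness of the constant $p - 1$, which rules out crude majorizations and demands that the scalar step produce exactly the right coefficient. Once these two ingredients are in place, the remainder of the argument is routine integration, and the case $1 \leq p \leq 2$ follows from duality applied to the established case.
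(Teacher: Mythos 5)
First, a point of calibration: the paper does not prove Fact~\ref{fact:bcl} at all --- it is quoted as a known result, with the non-sharp version credited to Tomczak-Jaegermann~\cite{TJ74:Moduli-Smoothness} and the sharp constant to Ball--Carlen--Lieb~\cite{ball1994sharp} --- so your proposal must stand on its own, and as written it has a genuine gap at its central step. The scalar two-point inequality and the final integration/ODE-comparison are fine in outline, and you correctly identify noncommutativity as the obstruction; but the second-order trace inequality you invoke to get past it is neither proved nor provable by the tools you cite, and in the form needed to retain the sharp constant it is false. The cited justifications do not work: $x \mapsto x^p$ is operator convex only for $1 \leq p \leq 2$ (and $x \mapsto x^{p/2}$ only for $p \leq 4$), and the Lieb/Ando concavity theorem is a first-order joint-concavity statement about $(A,B) \mapsto \trace(K^* A^s K B^{1-s})$; neither yields an upper bound on $\tfrac{d^2}{dt^2} \trace \abs{\mtx{A}+t\mtx{B}}^p$ with the factor $p(p-1)$. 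Worse, the per-term bound that your display amounts to if the constant $p-1$ is to survive the subsequent H{\"o}lder step, namely $\tfrac{d^2}{dt^2} \trace\abs{\mtx{A}+t\mtx{B}}^p \leq p(p-1)\trace\big(\abs{\mtx{A}+t\mtx{B}}^{p-2}\mtx{B}\mtx{B}^*\big)$, is false for $2 < p < 3$: take $\mtx{A} = \mathrm{diag}(1,0)$ and $\mtx{B}$ the $2\times 2$ coordinate-swap matrix. The eigenvalues of $\mtx{A}+t\mtx{B}$ are $1+t^2+O(t^4)$ and $-t^2+O(t^4)$, so at $t=0$ the left-hand side equals $2p$, while the right-hand side equals $p(p-1) < 2p$; symmetrizing over $\pm\mtx{B}$ changes nothing in this example. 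If you instead weaken the claimed bound (e.g.\ drop the factor $\tfrac12$ carried by the symmetrized average) so that it becomes true, the constant produced after integrating degrades, and exactly the sharpness that you yourself flag as essential is lost.

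This is not a bookkeeping issue that further care repairs: the missing second-order inequality is essentially the content of the Ball--Carlen--Lieb theorem, and their actual argument is organized differently (a key convexity lemma for a trace functional, a proof in the convexity regime, and a duality lemma transferring the inequality between $S_p$ and $S_{p'}$ with the constants $\cnst{C}_{p'} = p'-1 = 1/(p-1)$ matching up --- a verification your one-line ``follows from duality'' also leaves out). So the proposal does not constitute a proof of the Fact. The honest options are to do what the paper does and cite \cite{TJ74:Moduli-Smoothness,ball1994sharp}, or to reproduce the Ball--Carlen--Lieb argument in full; if only the non-sharp constant is needed (which suffices for every application in this paper up to absolute constants), a complete elementary proof along Tomczak-Jaegermann's lines for even $p$, or via the weaker bound~\eqref{eqn:subquadratic-weak}, would be a reasonable fallback.
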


Fact~\ref{fact:bcl} was first established by Tomczak-Jaegermann~\cite{TJ74:Moduli-Smoothness};
she obtained the sharp constant $\cnst{C}_p$ when $p$ is an even number.
Ball, Carlen, and Lieb~\cite[Thm.~1]{ball1994sharp} determined that $\cnst{C}_p$
is the optimal constant for all values of $p$.  Throughout the paper,
we will continue to write $\cnst{C}_p = p - 1$.

\subsection{Uniform Smoothness for Random Matrices}

Much as the Schatten class $S_p$ of matrices enjoys a uniform smoothness property,
the normed space $L_q(S_p)$ of random matrices is also uniformly smooth.  When
$2 \leq q \leq p$, this statement follows as an easy consequence of Fact~\ref{fact:bcl}.

\begin{corollary}[Uniform Smoothness for Random Matrices] \label{cor:2smooth}
Let $\mtx{X}, \mtx{Y}$ be random matrices of the same size.
When $2 \leq q \leq p$,
\[
\left[ \frac{1}{2} \left( \triplenorm{ \mtx{X} + \mtx{Y} }_{p,q}^q + \triplenorm{\mtx{X} - \mtx{Y}}_{p,q}^q \right) \right]^{2/q}
	\leq \triplenorm{ \mtx{X} }_{p,q}^2 + \cnst{C}_p \triplenorm{\mtx{Y}}_{p,q}^2.
\]
\end{corollary}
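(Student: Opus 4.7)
The plan is to lift the pointwise Schatten inequality from Fact~\ref{fact:bcl} to random matrices by combining it with the concavity of $t\mapsto t^{q/p}$ and Minkowski's inequality in $L^{q/2}$. Since both hypotheses $2\le q$ and $q\le p$ must be used somewhere, I expect the former to enter through Minkowski and the latter to enter through a concavity step.

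First I would apply Fact~\ref{fact:bcl} pointwise to the random matrices $\mtx{X}(\omega)$ and $\mtx{Y}(\omega)$ to get, for almost every $\omega$,
\[
\tfrac{1}{2}\bigl(\norm{\mtx{X}+\mtx{Y}}_p^p + \norm{\mtx{X}-\mtx{Y}}_p^p\bigr) \leq \bigl(\norm{\mtx{X}}_p^2 + \cnst{C}_p\norm{\mtx{Y}}_p^2\bigr)^{p/2}.
\]
Next, since $q/p \le 1$, the function $t \mapsto t^{q/p}$ is concave on $[0,\infty)$, so by Jensen's inequality applied to the two summands,
\[
\tfrac{1}{2}\bigl(\norm{\mtx{X}+\mtx{Y}}_p^q + \norm{\mtx{X}-\mtx{Y}}_p^q\bigr) \leq \Bigl[\tfrac{1}{2}\bigl(\norm{\mtx{X}+\mtx{Y}}_p^p + \norm{\mtx{X}-\mtx{Y}}_p^p\bigr)\Bigr]^{q/p} \leq \bigl(\norm{\mtx{X}}_p^2 + \cnst{C}_p\norm{\mtx{Y}}_p^2\bigr)^{q/2}.
\]

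I would then take expectations and raise to the power $2/q$, so the left-hand side becomes the expression $\bigl[\tfrac12(\triplenorm{\mtx{X}+\mtx{Y}}_{p,q}^q+\triplenorm{\mtx{X}-\mtx{Y}}_{p,q}^q)\bigr]^{2/q}$ we want to bound, and the right-hand side becomes $\bigl(\E(\norm{\mtx{X}}_p^2 + \cnst{C}_p\norm{\mtx{Y}}_p^2)^{q/2}\bigr)^{2/q}$. Finally, since $q/2 \ge 1$, Minkowski's inequality in $L^{q/2}(\Omega)$ applied to the nonnegative random variables $\norm{\mtx{X}}_p^2$ and $\cnst{C}_p\norm{\mtx{Y}}_p^2$ yields
\[
\bigl(\E(\norm{\mtx{X}}_p^2 + \cnst{C}_p\norm{\mtx{Y}}_p^2)^{q/2}\bigr)^{2/q} \leq (\E\norm{\mtx{X}}_p^q)^{2/q} + \cnst{C}_p (\E\norm{\mtx{Y}}_p^q)^{2/q} = \triplenorm{\mtx{X}}_{p,q}^2 + \cnst{C}_p\triplenorm{\mtx{Y}}_{p,q}^2,
\]
which is the desired bound.

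There is no substantial obstacle here; the only subtlety is getting the two power-mean/Jensen steps pointed in the correct direction. The concavity step requires $q \le p$ so that the exponent $q/p$ is at most one, and the Minkowski step requires $q \ge 2$ so that the exponent $q/2$ is at least one. Each of the hypotheses in the corollary is thus used exactly once, confirming the naturalness of the argument.
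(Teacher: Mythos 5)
Your argument is correct and is essentially the paper's own proof: the concavity/Jensen step for $t\mapsto t^{q/p}$ is exactly the Lyapunov (power-mean) passage from the $p$th to the $q$th power used there, and your Minkowski step in $L^{q/2}$ is the paper's triangle inequality for the $L_{q/2}$ norm. Your accounting of where $q\le p$ and $q\ge 2$ enter matches the paper's reasoning as well.
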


\begin{proof}
Apply Lyapunov's inequality to the left-hand side of~\eqref{eqn:2smooth}
to pass from the $p$th power to the $q$th power, and then transfer the
exponent to the right-hand side to obtain the pointwise bound
\[
\frac{1}{2} \left( \norm{ \mtx{X} + \mtx{Y} }_p^q + \norm{\mtx{X} - \mtx{Y} }_p^q \right)
	\leq \left[ \norm{ \mtx{X} }_{p}^2 + \cnst{C}_p \norm{\mtx{Y}}_p^2 \right]^{q/2}.
\]
Take the expectation, and use the triangle inequality for the $L_{q/2}$ norm:
\[
\frac{1}{2} \left( \E \norm{ \mtx{X} + \mtx{Y} }_p^q + \E \norm{\mtx{X}- \mtx{Y}}_p^q  \right)
	\leq \left[  \big( \E \norm{\mtx{X}}_{p}^q \big)^{2/q} + \cnst{C}_p \big( \E \norm{\mtx{Y}}_{p}^q \big)^{2/q} \right]^{q/2}.
\]
Reinterpret the latter display using the $L_q(S_p)$ norm $\triplenorm{\cdot}_{p,q}$.
\end{proof}

\subsection{Subquadratic Averages for Random Matrices}

Corollary~\ref{cor:2smooth} admits a powerful extension that
controls how the norm of a matrix changes if we add a random
matrix that has zero mean.  This result is the main tool that
we employ in our study of random products.

\begin{proposition}[Subquadratic Averages] 
\label{prop:Ricard_Xu}
Consider random matrices $\mtx{X}, \mtx{Y}$ of the same size that satisfy
$\E[ \mtx{Y} | \mtx{X} ] = \mtx{0}$.  When $2 \leq q \leq p$,
\[
\triplenorm{ \mtx{X} + \mtx{Y} }_{p,q}^2
	\leq \triplenorm{\mtx{X}}_{p,q}^2 + \cnst{C}_p \triplenorm{\mtx{Y}}_{p,q}^2.
\]
The constant $\cnst{C}_p = p - 1$ is the best possible.
\end{proposition}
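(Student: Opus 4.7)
My plan is to reduce the general conditional mean-zero case to the symmetric case already handled by Corollary~\ref{cor:2smooth} via a standard symmetrization. I would introduce $\mtx{Y}'$, an independent copy of $\mtx{Y}$ conditional on $\mtx{X}$, and work with the symmetrized difference $\tilde{\mtx{Y}} := \mtx{Y}-\mtx{Y}'$.

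The first step uses Jensen's inequality. Since $\E[\mtx{Y}' \,|\, \mtx{X},\mtx{Y}] = \E[\mtx{Y}' \,|\, \mtx{X}] = \mtx{0}$ and the function $\mtx{z} \mapsto \|\mtx{X}+\mtx{z}\|_p^q$ is convex (a $q$-th power, $q\geq 1$, of a norm), conditioning on $(\mtx{X},\mtx{Y})$ gives $\|\mtx{X}+\mtx{Y}\|_p^q \leq \E[\|\mtx{X}+\tilde{\mtx{Y}}\|_p^q \,|\, \mtx{X},\mtx{Y}]$, and taking a total expectation yields $\triplenorm{\mtx{X}+\mtx{Y}}_{p,q} \leq \triplenorm{\mtx{X}+\tilde{\mtx{Y}}}_{p,q}$. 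For the second step I would observe that swapping $\mtx{Y}$ and $\mtx{Y}'$ (which are conditionally iid given $\mtx{X}$) negates $\tilde{\mtx{Y}}$ while preserving the joint law, so $\tilde{\mtx{Y}}$ is conditionally symmetric and in particular $\triplenorm{\mtx{X}+\tilde{\mtx{Y}}}_{p,q}^q = \triplenorm{\mtx{X}-\tilde{\mtx{Y}}}_{p,q}^q$. Feeding this into Corollary~\ref{cor:2smooth} collapses the average on its left-hand side and produces
\[
\triplenorm{\mtx{X}+\tilde{\mtx{Y}}}_{p,q}^2 \leq \triplenorm{\mtx{X}}_{p,q}^2 + \cnst{C}_p \triplenorm{\tilde{\mtx{Y}}}_{p,q}^2.
\]

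What remains is to pass from $\triplenorm{\tilde{\mtx{Y}}}_{p,q}$ back to $\triplenorm{\mtx{Y}}_{p,q}$, and this is where the main obstacle lies. The naive bound from the triangle inequality, $\triplenorm{\tilde{\mtx{Y}}}_{p,q} \leq 2\triplenorm{\mtx{Y}}_{p,q}$, yields the proposition only with constant $4\cnst{C}_p$ rather than the sharp $\cnst{C}_p$. Achieving the sharp value $\cnst{C}_p = p-1$---which is already saturated by the Rademacher example $\mtx{Y}=\eps\mtx{B}$, where the claim collapses to Fact~\ref{fact:bcl}---requires bypassing the symmetrization detour and applying a uniform-smoothness inequality directly under the conditional mean-zero hypothesis. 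This is the content of the non-commutative martingale/Khintchine-type inequality of Ricard and Xu, which gives the proposition its name, and I would invoke it to remove the spurious factor of $4$ introduced by doubling and triangle inequality.
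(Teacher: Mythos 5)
Your symmetrization steps are correct as far as they go: conditional Jensen with the independent copy $\mtx{Y}'$, the conditional symmetry of $\tilde{\mtx{Y}} = \mtx{Y} - \mtx{Y}'$, and Corollary~\ref{cor:2smooth} do yield $\triplenorm{\mtx{X}+\mtx{Y}}_{p,q}^2 \leq \triplenorm{\mtx{X}}_{p,q}^2 + 4\cnst{C}_p\triplenorm{\mtx{Y}}_{p,q}^2$. But this is weaker than what a more direct use of the same hypotheses gives: applying conditional Jensen in the form $\triplenorm{\mtx{X}}_{p,q} \leq \triplenorm{\mtx{X}-\mtx{Y}}_{p,q}$ (no doubling, no independent copy), then Lyapunov and Corollary~\ref{cor:2smooth}, and rearranging, already produces the constant $2\cnst{C}_p$ --- this is exactly the paper's in-text argument. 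The genuine gap is in your final step: for the constant $\cnst{C}_p = p-1$ that the proposition actually asserts, your plan is to ``invoke'' the Ricard--Xu inequality. That inequality \emph{is} the statement being proved, so this is a citation of the result rather than a proof of it; the entire difficulty of the proposition (removing the spurious factor from the smoothness argument) is left unaddressed. Your remark that the Rademacher example shows the constant cannot be improved is fine, but it speaks only to optimality, not to establishing the inequality with $\cnst{C}_p$.

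The paper closes this gap with an elementary, self-contained argument (Lemma~\ref{lem:subquadratic-sharp} in the appendix) that your proposal is missing. The idea is to subdivide the perturbation: set $\mtx{Z} = n^{-1}\mtx{Y}$, note that the weak bound gives $D_1 := \triplenorm{\mtx{X}+\mtx{Z}}_{p,q}^2 - \triplenorm{\mtx{X}}_{p,q}^2 - 2\cnst{C}_p\triplenorm{\mtx{Z}}_{p,q}^2 \leq 0$, and use Corollary~\ref{cor:2smooth} with Lyapunov to show that the quantities $D_k := \triplenorm{\mtx{X}+k\mtx{Z}}_{p,q}^2 - \triplenorm{\mtx{X}+(k-1)\mtx{Z}}_{p,q}^2 - 2\cnst{C}_p k\triplenorm{\mtx{Z}}_{p,q}^2$ are nonincreasing in $k$, hence all nonpositive. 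Telescoping over $k = 1,\dots,n$ gives $\triplenorm{\mtx{X}+\mtx{Y}}_{p,q}^2 - \triplenorm{\mtx{X}}_{p,q}^2 \leq \cnst{C}_p \tfrac{n+1}{n}\triplenorm{\mtx{Y}}_{p,q}^2$, and letting $n \to \infty$ delivers the sharp constant. Some argument of this kind (or another genuine derivation of the smoothness inequality under the conditional mean-zero hypothesis) is what your proof needs in place of the appeal to Ricard--Xu.
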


Ricard and Xu~\cite{ricard2016noncommutative} obtained a version
of Proposition~\ref{prop:Ricard_Xu} in the more general setting
of a von Neumann algebra.  In their work, the expectation implicit in
the $L_q$ norm is replaced by the projection onto a subalgebra.
They emphasize that the key feature of their work is the determination
of the sharp constant.

Here, we offer a very short proof of Proposition~\ref{prop:Ricard_Xu}
with a suboptimal constant.  The method is drawn from Naor's paper~\cite{naor2012banach}.
Lemma~\ref{lem:subquadratic-sharp}, in the appendix, unspools an elementary argument
that delivers the sharp constant.

\begin{proof}
By Jensen's inequality, applied conditionally on $\mtx{X}$,
\begin{equation*}
\begin{aligned}
\frac{1}{2} \left( \triplenorm{ \mtx{X} + \mtx{Y} }_{p,q}^2 + \triplenorm{ \mtx{X} }_{p,q}^2 \right)
	&\leq \frac{1}{2} \left( \triplenorm{ \mtx{X} + \mtx{Y} }_{p,q}^2 + \triplenorm{ \mtx{X} - \mtx{Y} }_{p,q}^2\right)  \\
	&\leq \left[ \frac{1}{2} \left( \triplenorm{ \mtx{X} + \mtx{Y} }_{p,q}^q + \triplenorm{ \mtx{X} - \mtx{Y} }_{p,q}^q \right) \right]^{2/q}
	\leq \triplenorm{ \mtx{X} }_{p,q}^2 + \cnst{C}_p \triplenorm{ \mtx{Y} }_{p,q}^2.
\end{aligned}
\end{equation*}
The second inequality is Lyapunov's; the third is Corollary~\ref{cor:2smooth}.
Upon rearranging, we find that
\begin{equation} \label{eqn:subquadratic-weak}
\triplenorm{ \mtx{X} + \mtx{Y} }_{p,q}^2 \leq \triplenorm{\mtx{X}}_{p,q}^2 + 2 \cnst{C}_p \triplenorm{\mtx{Y}}_{p,q}^2.
\end{equation}
This is the stated result, with a spurious factor of $2$. 
\end{proof}

\subsection{Matrix-Valued Martingales}

To demonstrate the value of Proposition~\ref{prop:Ricard_Xu}, let us
explain how it leads to moment bounds for a matrix-valued martingale sequence.
Consider a null matrix martingale $\{ \mtx{X}_1, \dots, \mtx{X}_n \} \subset \M_d$
with difference sequence $\{ \mtx{\Delta}_1, \dots, \mtx{\Delta}_n \} \subset \M_d$.
That is,
\[
\mtx{X}_0 = \mtx{0}
\quad\text{and}\quad
\mtx{X}_{i} = \mtx{X}_{i-1} + \mtx{\Delta}_{i}
\quad\text{where}\quad
\Expect[ \mtx{\Delta}_i | \mtx{X}_0, \dots, \mtx{X}_{i-1} ] = \mtx{0}
\quad\text{for $i = 1, \dots, n$.}
\]
Applying Proposition~\ref{prop:Ricard_Xu} repeatedly, we arrive at the bound
\begin{equation} \label{eqn:mtx-martingale}
\triplenorm{ \mtx{X}_n }_{p,q}^2
	\leq \cnst{C}_p \sum_{i=1}^n \triplenorm{ \mtx{\Delta}_i }_{p,q}^2.
\end{equation}
In words, the squared norm of the martingale is controlled
by the sum of the squares of the norms of the martingale differences.
The inequality~\eqref{eqn:mtx-martingale} is a powerful extension of the orthogonality
of the increments of a martingale taking values in an inner-product space, say $S_2$.
The uniform smoothness constant $\cnst{C}_p$ shows how the geometry of the matrix space
intermediates.

In this work, we will develop bounds for random matrix products
by applying a similar technique to appropriately chosen
decompositions of the product.

\subsection{History}
\label{sec:unif-smooth-history}

The approach in this section has a long history.  Let us summarize the contributions
that are most relevant to our development.

For real numbers, the (sharp) uniform
smoothness property in Fact~\ref{fact:bcl} is known as the \emph{two-point inequality};
it was established independently by Leonard Gross~\cite{Gro72} and Aline Bonami~\cite{Bon70} in the early 1970s,
with later contributions by William Beckner~\cite{Bec75}.  In 1974, the uniform smoothness property
for the Schatten classes was obtained by Nicole Tomczak-Jaegermann~\cite{TJ74:Moduli-Smoothness}.
It took another 20 years before Ball, Carlen, and Lieb~\cite{ball1994sharp}
obtained the sharp uniform smoothness constants for all Schatten classes.  The property dual to uniform
smoothness is called \emph{uniform convexity}.  See~\cite{ball1994sharp} for a detailed exposition.

Tomczak-Jaegermann~\cite[Thm.~3.1]{TJ74:Moduli-Smoothness} also demonstrated that Rademacher averages
are subquadratic in each Schatten space $S_p$ with $p \geq 2$; that is, the Banach space $S_p$ is \emph{type 2}~\cite{LedTal11}.
This fact is a prototype
for the more general result stated in Proposition~\ref{prop:Ricard_Xu}.
Tropp~\cite[Sec.~4.8]{Tro12} points out that parts of the
Ahlswede--Winter~\cite[App.]{AW02:Strong-Converse} theory of sums of independent random matrices
already follow from Tomczak-Jaegermann's work.  (In contrast, Tropp's matrix concentration inequalities~\cite{Tro12}
are more closely related to a fact from operator theory, the noncommutative Khintchine inequality
of Fran\c{c}oise Lust-Piquard~\cite{LP86:Inegalites-Khintchine}; Tropp's results are derived using
a theorem~\cite[Thm.~6]{Lie73:Convex-Trace} of Elliot Lieb.)

Assaf Naor~\cite{naor2012banach} traces the application of uniform convexity inequalities
in the study of martingales to a 1975 paper of Gilles Pisier~\cite{Pis75:Martingales-Values}.
Naor~\cite{naor2012banach} gives a nice introduction to this circle of ideas, which
he uses to derive a general version of the Azuma inequality that holds in any uniformly smooth Banach space.

At least as early as 1988, Donald Burkholder~\cite{Bur88:Sharp-Inequalities} applied
closely related convexity inequalities to derive sharp inequalities for martingales
taking values in a Hilbert space.
The paper~\cite{ricard2016noncommutative} of {\'E}ric Ricard and Quanhua Xu is a
recent entry in this line of research.

\section{A Product of Independent Random Matrices}
\label{sec:main-results}

In this section, we obtain our main results on the
growth and concentration of a product of independent
random matrices.
Section~\ref{sec:decomp} shows how to decompose a random product
into pieces that we can control using a recursive argument.
Based on these ideas, we derive Theorem~\ref{thm:pnorm},
a general bound on the moments of the norm of the matrix product.
The moment estimate leads to a family of expectation bounds (Corollary~\ref{cor:expectation})
and probability bounds (Corollary~\ref{cor:concentration}).

The balance of the paper contains applications of these results
(Section~\ref{sec:perturbations}) and extensions of the method
to other settings (Section~\ref{sec:extensions}).

\subsection{Decomposition of Random Products}
\label{sec:decomp}

Our approach is based on a recursive argument that describes
how the product evolves as we include more factors.  At each
step, we decompose the product into a nonrandom term and a
random term with mean zero.  This formulation allows us
to apply Proposition~\ref{prop:Ricard_Xu} on subquadratic averages.

Consider a fixed matrix $\mtx{Z}_0 \in \M_d$ and an independent family $\{ \mtx{Y}_1,  \mtx{Y}_2, \dots, \mtx{Y}_n  \} \subset \M_d$
of random matrices.  We can recursively construct products of these random matrices:
\[
\mtx{Z}_{i} = \mtx{Y}_i \mtx{Z}_{i-1}
\quad\text{for $i = 1, \dots, n$.}
\]
Evidently, the last element of the sequence takes the form $\mtx{Z}_n = \mtx{Y}_n \cdots \mtx{Y}_1 \mtx{Z}_0$.
By independence, $\E \mtx{Z}_n = (\E \mtx{Y}_n) \cdots (\E \mtx{Y}_1) \mtx{Z}_0$.

The random product $\mtx{Z}_i$ admits a simple decomposition into a mean term and a fluctuation term:
\begin{equation} \label{eqn:prod-decomp}
\mtx{Z}_i = \mtx{Y}_i\mtx{Z}_{i-1} = (\E \mtx{Y}_i)\mtx{Z}_{i-1} + (\mtx{Y}_i-\E \mtx{Y}_i)\mtx{Z}_{i-1}
	\quad\text{for each $i = 1, \dots, n$.}
\end{equation}
Since $\mtx{Y}_i$ is independent from $\mtx{Z}_{i-1}$, the second term is conditionally zero mean:
\begin{equation} \label{eqn:fluctuation-mean}
\E[ (\mtx{Y}_i - \E \mtx{Y}_i) \mtx{Z}_{i-1} | \mtx{Z}_{i-1} ] = \mtx{0}.
\end{equation}
The property~\eqref{eqn:fluctuation-mean} supports the use of Proposition~\ref{prop:Ricard_Xu}.
It is also helpful to have an explicit norm bound for the random fluctuation term:
\begin{equation} \label{eqn:fluctuation-bd}
\begin{aligned}
\triplenorm{ (\mtx{Y}_i - \E \mtx{Y}_i) \mtx{Z}_{i-1} }_{p,q}
	\leq \left( \E \norm{ \mtx{Y}_i - \E \mtx{Y}_i }^q \cdot \Expect \norm{ \mtx{Z}_{i-1} }_p^q \right)^{1/q}
	= \left( \E \norm{ \mtx{Y}_i - \E \mtx{Y}_i }^q \right)^{1/q} \triplenorm{ \mtx{Z}_i }_{p,q}. 
\end{aligned}	
\end{equation}
The first relation follows from the operator ideal property of the Schatten $p$-norm
and the statistical independence of the random matrices $\mtx{Y}_i$ and $\mtx{Z}_{i-1}$.

We can study the concentration properties of the product $\mtx{Z}_i$
using a related decomposition:
\begin{equation} \label{eqn:conc-decomp}
\mtx{Z}_i - \E \mtx{Z}_i = \mtx{Y}_i\mtx{Z}_{i-1} - (\E \mtx{Y}_i)(\E \mtx{Z}_{i-1}) = (\E \mtx{Y}_i)(\mtx{Z}_{i-1} - \E \mtx{Z}_{i-1}) + (\mtx{Y}_i-\E \mtx{Y}_i)\mtx{Z}_{i-1}.
\end{equation}
As in~\eqref{eqn:fluctuation-mean}, the second term is a fluctuation that
is conditionally zero mean.  The fluctuation term satisfies
the norm bound~\eqref{eqn:fluctuation-bd}.

\subsection{Growth and Concentration}

Our main result controls the growth of the moments of
a product of independent random matrices.  It also describes
how well the random product concentrates around its
expectation.

\begin{theorem}[Growth and Concentration of Products] \label{thm:pnorm}
Consider a fixed matrix $\mtx{Z}_0 \in \CC^{d\times r}$ and an independent family $\{ \mtx{Y}_1,  \mtx{Y}_2, \dots, \mtx{Y}_n  \} \subset \M_d$
of random matrices.  Form the product
\[
\mtx{Z}_n = \mtx{Y}_n \mtx{Y}_{n-1} \cdots \mtx{Y}_2 \mtx{Y}_1 \mtx{Z}_0 \in \CC^{d \times r}.
\]
For parameters $2 \leq q \leq p$, assume that
\[
\norm{ \E \mtx{Y}_i } \leq m_i
\quad\text{and}\quad
\big( \E \norm{ \mtx{Y}_i - \E \mtx{Y}_i }^q \big)^{1/q}
	\leq \sigma_i m_i 
	\quad\text{for $i = 1, \dots, n$.}
\]
Define the product of means and the accumulated relative variance
\[
M = \prod_{i=1}^n m_i
\quad\text{and}\quad
v = \sum_{i=1}^n \sigma_i^2.
\]
Then the random product $\mtx{Z}_n$ satisfies the growth bound and the concentration bound
\begin{align}
\triplenorm{ \mtx{Z}_n }_{p,q} &\leq \econst^{ \cnst{C}_p v / 2}  \norm{ \mtx{Z}_0 }_{p} \cdot M; \label{eq:master_norm1} \\
\triplenorm{ \mtx{Z}_n - \E \mtx{Z}_n }_{p,q} &\leq \left( \econst^{\cnst{C}_p v} - 1 \right)^{1/2} \norm{ \mtx{Z}_0 }_{p} \cdot M. \label{eq:master_norm2}
\end{align}
\end{theorem}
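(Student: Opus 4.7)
My strategy is a simple induction on $i = 1, 2, \dots, n$ driven by Proposition~\ref{prop:Ricard_Xu}, applied to the decompositions \eqref{eqn:prod-decomp} and \eqref{eqn:conc-decomp} from Section~\ref{sec:decomp}. The growth bound \eqref{eq:master_norm1} is the warm-up, and the concentration bound \eqref{eq:master_norm2} follows from essentially the same recursion, with the twist that the recursion is no longer homogeneous.

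For the growth bound, I would set $\alpha_i := \triplenorm{\mtx{Z}_i}_{p,q}^2$. Applying Proposition~\ref{prop:Ricard_Xu} to the decomposition $\mtx{Z}_i = (\E\mtx{Y}_i)\mtx{Z}_{i-1} + (\mtx{Y}_i - \E\mtx{Y}_i)\mtx{Z}_{i-1}$, whose second summand is conditionally mean zero given $\mtx{Z}_{i-1}$ by \eqref{eqn:fluctuation-mean}, and then invoking the operator-ideal property \eqref{eqn:operator-ideal} on the first summand and the bound \eqref{eqn:fluctuation-bd} on the second, I obtain
\[
\alpha_i \;\leq\; m_i^2 \bigl( 1 + \cnst{C}_p \sigma_i^2 \bigr) \alpha_{i-1}.
\]
Iterating from $\alpha_0 = \norm{\mtx{Z}_0}_p^2$ and applying $1 + x \leq \econst^x$ to the variance factors yields $\alpha_n \leq \prod_i m_i^2 \cdot \econst^{\cnst{C}_p v} \norm{\mtx{Z}_0}_p^2 = M^2 \econst^{\cnst{C}_p v} \norm{\mtx{Z}_0}_p^2$, which is exactly \eqref{eq:master_norm1}.

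For the concentration bound I write $\mtx{W}_i := \mtx{Z}_i - \E \mtx{Z}_i$, so $\mtx{W}_0 = \mtx{0}$, and decompose $\mtx{W}_i = (\E\mtx{Y}_i)\mtx{W}_{i-1} + (\mtx{Y}_i - \E\mtx{Y}_i)\mtx{Z}_{i-1}$ as in \eqref{eqn:conc-decomp}. The key observation is that the fluctuation term is still conditionally zero mean given $\mtx{W}_{i-1}$: indeed $\mtx{W}_{i-1}$ is a measurable function of $\mtx{Z}_{i-1}$, and $\mtx{Y}_i$ is independent of $\mtx{Z}_{i-1}$, so tower gives $\E[(\mtx{Y}_i - \E\mtx{Y}_i)\mtx{Z}_{i-1} \mid \mtx{W}_{i-1}] = \mtx{0}$. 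Hence Proposition~\ref{prop:Ricard_Xu} applies, and with $\beta_i := \triplenorm{\mtx{W}_i}_{p,q}^2$ I get the inhomogeneous recursion
\[
\beta_i \;\leq\; m_i^2\, \beta_{i-1} + \cnst{C}_p\, \sigma_i^2\, m_i^2\, \alpha_{i-1},
\]
where the already-proved growth estimate controls $\alpha_{i-1}$.

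The only mildly tricky step is unwinding this inhomogeneous recursion to get the sharp exponent $\econst^{\cnst{C}_p v} - 1$ rather than something like $\cnst{C}_p v\, \econst^{\cnst{C}_p v}$. I would rescale by setting $\gamma_i := \beta_i / (M_i \norm{\mtx{Z}_0}_p)^2$ with $M_i := \prod_{j \leq i} m_j$, which absorbs the means and turns the recursion into $\gamma_i \leq \gamma_{i-1} + \cnst{C}_p \sigma_i^2 P_{i-1}$ where $P_k := \prod_{j \leq k}(1 + \cnst{C}_p \sigma_j^2)$. Since $P_i - P_{i-1} = \cnst{C}_p \sigma_i^2 P_{i-1}$, the sum telescopes: $\gamma_n \leq P_n - P_0 = P_n - 1 \leq \econst^{\cnst{C}_p v} - 1$, and rescaling back produces \eqref{eq:master_norm2}. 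The main conceptual obstacle is checking the conditional mean-zero property required for Proposition~\ref{prop:Ricard_Xu} in the concentration step; everything else is bookkeeping.
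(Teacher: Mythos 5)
Your proposal is correct and follows essentially the same route as the paper: the same decompositions \eqref{eqn:prod-decomp} and \eqref{eqn:conc-decomp}, Proposition~\ref{prop:Ricard_Xu} at each step, and the same homogeneous/inhomogeneous recursions, with your conditioning argument for the fluctuation term being exactly what the paper uses implicitly. The only (harmless) difference is cosmetic: you telescope the discrete products $P_i = \prod_{j\leq i}(1+\cnst{C}_p\sigma_j^2)$ exactly and bound $P_n - 1 \leq \econst^{\cnst{C}_p v}-1$ at the end, whereas the paper exponentiates first and then invokes the numerical inequality of Lemma~\ref{lem:number_inequality}; both yield the identical bound.
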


\begin{proof}[Proof of Theorem~\ref{thm:pnorm}, relation \eqref{eq:master_norm1}]
By the homogeneity of~\eqref{eq:master_norm1}, we may assume that $m_i = 1$ for each index $i$,
so that also $M = 1$.  As in \eqref{eqn:prod-decomp}, we have the decomposition
\[
\mtx{Z}_i := \mtx{Y}_i\mtx{Z}_{i-1} = (\E \mtx{Y}_i)\mtx{Z}_{i-1} + (\mtx{Y}_i-\E \mtx{Y}_i)\mtx{Z}_{i-1}
	\quad\text{for each $i = 1, \dots, n$.}
\]
Now, Proposition~\ref{prop:Ricard_Xu} implies that
\begin{align*}
\triplenorm{\mtx{Z}_i }_{p,q}^2 &\leq \triplenorm{(\E \mtx{Y}_i)\mtx{Z}_{i-1}}_{p,q}^2 + \cnst{C}_p \cdot \triplenorm{(\mtx{Y}_i-\E \mtx{Y}_i) \mtx{Z}_{i-1}}_{p,q}^2\\
&\leq \norm{ \E \mtx{Y}_i }^2 \cdot\triplenorm{\mtx{Z}_{i-1}}_{p,q}^2 + \cnst{C}_p \left( \E \norm{\mtx{Y}_i-\E \mtx{Y}_i}^q \right)^{2/q} \cdot \triplenorm{\mtx{Z}_{i-1}}_{p,q}^2 \\
& \leq (1 + \cnst{C}_p \sigma_i^2) \cdot \triplenorm{\mtx{Z}_{i-1}}_{p,q}^2 \\
& \leq \exp(\cnst{C}_p \sigma_i^2)\cdot \triplenorm{\mtx{Z}_{i-1}}_{p,q}^2.
\end{align*}
The second line follows from~\eqref{eqn:fluctuation-bd}, and the third depends on our hypotheses about the factors $\mtx{Y}_i$. 
The last relation requires the numerical inequality $1 + a \leq \econst^a$, valid for all $a \in \R$.
By iteration, 
\begin{equation} \label{eqn:growth-i}
\triplenorm{\mtx{Z}_i }_{p,q}^2 \leq \exp\left( \cnst{C}_p \sum_{k=1}^i \sigma_k^2 \right) \cdot \norm{\mtx{Z}_{0}}_{p}^2.
\end{equation}
In the final step, we use the assumption that $\mtx{Z}_0$ is not random to see that $\triplenorm{\mtx{Z}_0}_{p,q} = \norm{\mtx{Z}_0}_p$.
For $i = n$, the formula~\eqref{eqn:growth-i} is the advertised result.
\end{proof}

\begin{proof}[Proof of Theorem~\ref{thm:pnorm}, relation \eqref{eq:master_norm2}]
The pattern of argument is similar with the proof of~\eqref{eq:master_norm1}.
By the homogeneity of~\eqref{eq:master_norm2}, we may assume that all $m_i = 1$ and that $M = 1$.
As in~\eqref{eqn:conc-decomp}, we have the decomposition
\[
\mtx{Z}_i - \E \mtx{Z}_i = \mtx{Y}_i\mtx{Z}_{i-1} - (\E \mtx{Y}_i)(\E \mtx{Z}_{i-1}) = (\E \mtx{Y}_i)(\mtx{Z}_{i-1} - \E \mtx{Z}_{i-1}) + (\mtx{Y}_i-\E \mtx{Y}_i)\mtx{Z}_{i-1}.
\]
Again, we invoke Proposition~\ref{prop:Ricard_Xu} to ascertain that
\begin{align*}
\triplenorm{\mtx{Z}_i - \E \mtx{Z}_i}_{p,q}^2
	&\leq \triplenorm{(\E \mtx{Y}_i)(\mtx{Z}_{i-1}-\E \mtx{Z}_{i-1})}_{p,q}^2 + \cnst{C}_p\cdot\triplenorm{(\mtx{Y}_i-\E \mtx{Y}_i) \mtx{Z}_{i-1}}_{p,q}^2\\
	&\leq \triplenorm{\mtx{Z}_{i-1}-\E \mtx{Z}_{i-1}}_{p,q}^2 + \cnst{C}_p \sigma_i^2 \cdot \triplenorm{\mtx{Z}_{i-1}}_{p,q}^2 \\
	&\leq \triplenorm{\mtx{Z}_{i-1}-\E \mtx{Z}_{i-1}}_{p,q}^2 + \cnst{C}_p \sigma_i^2 \exp\left( \sum_{k=1}^{i-1} \cnst{C}_p  \sigma_i^2 \right) \cdot \norm{ \mtx{Z}_{0} }_{p}^2.
\end{align*}
The last inequality is our growth bound~\eqref{eqn:growth-i}.  This recurrence relation delivers
\begin{align*}
\triplenorm{\mtx{Z}_n - \E \mtx{Z}_n}_{p,q}^2
	& \leq \triplenorm{\mtx{Z}_0 - \E \mtx{Z}_0}_{p,q}^2 + \left[ \sum_{i=1}^n \cnst{C}_p \sigma_i^2 \exp\left( \sum_{k=1}^{i-1} \cnst{C}_p \sigma_k^2 \right) \right] \cdot \norm{ \mtx{Z}_{0} }_{p}^2 \\
	& = \left[ \sum_{i=1}^n \cnst{C}_p \sigma_i^2 \exp\left( \sum_{k=1}^{i-1}\cnst{C}_p \sigma_k^2 \right)\right] \cdot \norm{ \mtx{Z}_{0} }_{p}^2 \\
	& \leq  \left[ \exp\left( \sum_{i=1}^n \cnst{C}_p \sigma_i^2 \right) -1 \right]  \cdot \norm{ \mtx{Z}_{0} }_{p}^2.
\end{align*}
The equality holds because $\mtx{Z}_0$ is not random.  The last relation is a numerical inequality,
whose proof appears in Lemma~\ref{lem:number_inequality}.
\end{proof}

Observe that the difference between the bounds~\eqref{eq:master_norm1} and~\eqref{eq:master_norm2}
is only visible when $\cnst{C}_p v$ is small, in which case
\begin{equation} \label{eqn:thm-upshot}
\econst^{\cnst{C}_p v/2} \approx 1
\quad\text{and}\quad
\left(\econst^{\cnst{C}_p v} - 1 \right)^{1/2} \approx \sqrt{\cnst{C}_p v}.
\end{equation}
This is the setting where the concentration result may be nontrivial.

The next two remarks contain some minor extensions of Theorem~\ref{thm:pnorm}.
Similar extensions are possible at other points in this paper.  For the most
part, we omit these developments.

\begin{remark}[Growth from Concentration]
In some instances, we can improve over the growth bound~\eqref{eq:master_norm1}
by applying the triangle inequality to the decomposition
$\mtx{Z}_n = (\E \mtx{Z}_n) + (\mtx{Z}_n - \E \mtx{Z}_n)$
and invoking the concentration bound~\eqref{eq:master_norm2}:
$$
\triplenorm{ \mtx{Z}_n }_{p,q}
	\leq \norm{ \E \mtx{Z}_n }_p + \left( \econst^{\cnst{C}_p v} - 1 \right)^{1/2} \norm{ \mtx{Z}_0 }_{p} \cdot M.
$$
Similarly, we can apply Proposition~\ref{prop:Ricard_Xu} together with~\eqref{eq:master_norm2} to obtain
$$
\triplenorm{ \mtx{Z}_n }_{p,q}^2
	\leq \norm{ \E \mtx{Z}_n }_p^2 + \cnst{C}_p \left( \econst^{\cnst{C}_p v} - 1 \right) \norm{\mtx{Z}_0}_p^2 \cdot M^2.
$$
Neither of these bounds represents a strict improvement over the other or over the growth bound~\eqref{eq:master_norm1}.
\end{remark}

\begin{remark}[Uniform Bounds on Factors] \label{rmk:strong_assumption}
Potentially stronger estimates are possible if the factors are bounded in norm.
Fix parameters $2 \leq q \leq p$.
Suppose that $\norm{ \mtx Y_i } \leq b_i$  almost surely and $\triplenorm{ \mtx{Y}_i - \E \mtx{Y}_i }_{p,q} \leq \sigma_i b_i$ for each index $i$.
Define $B = \prod_{i=1}^n b_i$ and $v = \sum_{i=1}^n \sigma_i^2$. Then  
\begin{align}
\triplenorm{\mtx{Z}_n}_{p,q} & \leq \norm{ \mtx{Z}_0}_p \cdot B; \label{eq:as_norm1} \\
\triplenorm{\mtx{Z}_n - \E \mtx{Z}_n }_{p,q} & \leq \sqrt{\cnst{C}_p v} \, \norm{ \mtx{Z}_0 }_p \cdot B. \label{eq:as_norm2}
\end{align}
Compare these results with \eqref{eq:master_norm1}, \eqref{eq:master_norm2}, and \eqref{eqn:thm-upshot}.
As for the proof, the growth bound~\eqref{eq:as_norm1} is an immediate consequence of the definition $\mtx{Z}_n = \mtx{Y}_n \cdots \mtx{Y}_1 \mtx{Z}_{0}$.
The concentration result~\eqref{eq:as_norm2} follows if we repeat the proof of~\eqref{eq:master_norm2},
using the growth bound~\eqref{eq:as_norm1} in place of~\eqref{eq:master_norm1}.
\end{remark}

\subsection{Expectation Bounds for the Spectral Norm}

In many cases, we just need to know the expected value of the
product $\norm{ \mtx{Z}_n }$ or the expected value of the
fluctuation $\norm{ \mtx{Z}_n - \E \mtx{Z}_n }$.  We can
obtain bounds for these quantities
as an easy consequence of Theorem~\ref{thm:pnorm}.

\begin{corollary}[Expectation Bounds] \label{cor:expectation}
Consider an independent sequence $\{ \mtx{Y}_1, \dots, \mtx{Y}_n \} \subset \M_d$
of random matrices, and form the product $\mtx{Z}_n = \mtx{Y}_n \cdots \mtx{Y}_1$.
Assume that
\[
\norm{ \Expect \mtx{Y}_i } \leq m_i
\quad\text{and}\quad
\big( \Expect \norm{ \mtx{Y}_i - \Expect \mtx{Y}_i }^2 \big)^{1/2} \leq \sigma_i m_i
\quad\text{for $i = 1, \dots, n$.}
\]
Let $M = \prod_{i=1}^n m_i$ and $v = \sum_{i=1}^n \sigma_i^2$.  Then
\begin{align}
\E \norm{ \mtx{Z}_n }
	&\leq \exp\left( \sqrt{2 v \, (2 v \vee \log d)} \right) \cdot M. 
	\label{eqt:expectation_1}
\intertext{Provided that $v\,(1+ 2 \log d) \leq 1$, then also}
\E \norm{ \mtx{Z}_n - \E \mtx{Z}_n }
	&\leq \sqrt{\econst^2 v \, (1+ 2 \log d) } \cdot M. 
	\label{eqt:expectation_2}
\end{align}
\end{corollary}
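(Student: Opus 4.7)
The plan is to apply Theorem~\ref{thm:pnorm} with initial matrix $\mtx{Z}_0 = \Id$ and moment parameter $q = 2$, then convert the resulting $L_2(S_p)$ bound into an expected spectral norm bound via~\eqref{eqn:moment-method}: for any random $d \times d$ matrix $\mtx{A}$ and any $p \geq 2$, one has $\E \norm{\mtx{A}} \leq \triplenorm{\mtx{A}}_{p,2}$. Since $\norm{\Id}_p = d^{1/p}$, Theorem~\ref{thm:pnorm} yields, for every $p \geq 2$,
\begin{align*}
\E \norm{\mtx{Z}_n} &\leq \exp\!\bigl(\tfrac{(p-1)v}{2} + \tfrac{\log d}{p}\bigr) \cdot M, \\
\E \norm{\mtx{Z}_n - \E \mtx{Z}_n} &\leq d^{1/p}\sqrt{\econst^{(p-1)v}-1} \cdot M.
\end{align*}
The remaining work is simply to choose $p$ wisely in each line.

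For~\eqref{eqt:expectation_1}, I would minimize the exponent $g(p) := \tfrac{(p-1)v}{2} + \tfrac{\log d}{p}$ over $p \geq 2$. Setting $g'(p)=0$ yields the stationary point $p^\star = \sqrt{2\log d/v}$, which lies in $[2, \infty)$ precisely when $\log d \geq 2v$. In that regime $g(p^\star) = \sqrt{2v \log d} - v/2 \leq \sqrt{2v\log d}$. When $2v > \log d$, the minimum over $[2,\infty)$ is at the boundary $p = 2$, giving $g(2) = v/2 + \log d/2 \leq 3v/2 \leq 2v$. Both regimes combine into the single bound $\exp\!\bigl(\sqrt{2v\,(2v \vee \log d)}\bigr) \cdot M$.

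For~\eqref{eqt:expectation_2}, I would set $p = 1 + 2\log d$, so that $p - 1 = 2\log d$ and $d^{1/p} = \exp\!\bigl(\log d/(1+2\log d)\bigr) \leq \econst^{1/2}$. The hypothesis $pv = v(1+2\log d) \leq 1$ implies $(p-1)v \leq 1$, so the elementary inequality $\econst^x - 1 \leq \econst x$ (valid on $[0,1]$) gives $\sqrt{\econst^{(p-1)v}-1} \leq \sqrt{\econst(p-1)v}$. Combining these two factors yields
\[
\E \norm{\mtx{Z}_n - \E \mtx{Z}_n} \leq \econst^{1/2}\sqrt{\econst(p-1) v} \cdot M = \sqrt{\econst^2 (p-1) v} \cdot M \leq \sqrt{\econst^2 v (1+2\log d)} \cdot M.
\]

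The main obstacle is just the case analysis for~\eqref{eqt:expectation_1}, namely checking that the interior optimum $p^\star$ and the boundary choice $p = 2$ both produce bounds absorbed by the single compact formula involving $2v \vee \log d$. The concentration bound~\eqref{eqt:expectation_2} is essentially a mechanical calculation once $p$ is chosen to balance the dimensional factor $d^{1/p}$ against the exponential in~\eqref{eq:master_norm2}.
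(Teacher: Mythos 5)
Your proposal is correct and follows essentially the same route as the paper's proof: set $\mtx{Z}_0 = \Id$, take $q = 2$, pass from $\E\norm{\cdot}$ to $\triplenorm{\cdot}_{p,2}$ via~\eqref{eqn:moment-method}, and then tune $p$ --- your optimized choices for~\eqref{eqt:expectation_1} coincide with the paper's $p = \sqrt{2(2v \vee \log d)/v}$ in both regimes. The only wrinkle is that your choice $p = 1 + 2\log d$ for~\eqref{eqt:expectation_2} satisfies the requirement $p \geq q = 2$ of Theorem~\ref{thm:pnorm} only when $d \geq 2$; the paper instead takes $p = 2(1+\log d)$, which also covers $d = 1$, and your argument is repaired for that degenerate case by simply taking $p = 2$ there.
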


\begin{proof}
To apply Theorem~\ref{thm:pnorm}, we set $\mtx{Z}_0 = \Id$ and choose the power $q = 2$.

To obtain the growth bound~\eqref{eqt:expectation_1},
consider the Schatten norm of order $p = \sqrt{2 (2v \vee \log d) / v}$.
Note that $p \geq 2$ and that $\norm{ \mtx{Z}_0 }_p \leq d^{1/p} \leq \econst^{pv/2}$.
Invoke Theorem~\ref{thm:pnorm}, relation~\eqref{eq:master_norm1},
to see that
\[
\E \norm{ \mtx{Z}_n }
	\leq \triplenorm{ \mtx{Z}_n }_{p,2}
	\leq \econst^{\cnst{C}_p v/2} \norm{ \mtx{Z}_0 }_p \cdot M
	\leq \econst^{p v/2} \cdot \econst^{pv/2} \cdot M
	= \econst^{pv} \cdot M.
\]
We used the fact that $\cnst{C}_p = p - 1 < p$.  This is the stated result.

To obtain the concentration bound~\eqref{eqt:expectation_2},
consider the Schatten norm $p = 2(1 + \log d)$.
Note that $p \geq 2$ and that $\norm{ \mtx{Z}_0 }_p \leq d^{1/p} \leq \sqrt{\econst}$.
Now, we use Theorem~\ref{thm:pnorm}, relation~\eqref{eq:master_norm2},
in a similar fashion.  Assuming that $\cnst{C}_p v \leq 1$,
\[
\E \norm{ \mtx{Z}_n - \E \mtx{Z}_n }
	\leq \triplenorm{ \mtx{Z}_n - \E \mtx{Z}_n }_{p,2}
	\leq \big( \econst^{\cnst{C}_p v} - 1 \big)^{1/2} \norm{\mtx{Z}_0}_p \cdot M
	\leq \econst \sqrt{ \cnst{C}_pv } \cdot M. 
\]
The last bound is the numerical inequality $\econst^a - 1 \leq \econst a$,
valid when $a \in [0,1]$.  Finally, note that
$\cnst{C}_p = p - 1 = 1 + 2 \log d$.
\end{proof}

The inequality \eqref{eqt:expectation_1} shows its power when each $\sigma_i$ is small.
Assume that each $m_i = 1$ and $\sigma_i \leq b/n$ for a constant $b$.
Then it is not hard to check that
\[
\norm{\E\mtx{Z}_n} \leq 1
\quad \text{and}\quad
\norm{ \mtx{Z}_n } \leq \left(1 + (b/n) \right)^n\leq \econst^b.
\]
If $L\sqrt{(2 \log d)/n}$ is close to zero, then
\eqref{eqt:expectation_1} implies
\[
\E \norm{ \mtx{Z}_n } \leq \econst^{ b \sqrt{(2\log d)/n} } \approx 1.
\]
That is, $\E \norm{ \mtx{Z}_n }$ is much closer to $\norm{ \E\mtx{Z}_n }$
than to the worst-case value $\econst^b$.

\begin{remark}[Uniform Bounds on Factors] \label{rmk:strong_assumption_2}
Fix $p \geq 2$. Assume that $\norm{ \mtx{Y}_i } \leq b_i$ almost surely
and $\triplenorm{ \mtx{Y}_i - \E \mtx{Y}_i }_{p,2} \leq \sigma_i b_i$ for each $i$.
Let $v = \sum_{i=1}^n \sigma_i^2$ and $B = \prod_{i=1}^n b_i$.  Then Remark~\ref{rmk:strong_assumption} implies that
\[
\E \norm{ \mtx{Z}_n - \E \mtx{Z}_n } \leq \sqrt{\econst v \, (1+ 2\log d)} B.
\]
This improves the constant in \eqref{eqt:expectation_2} by a factor of $\sqrt{\econst}$,
and it removes the condition that $v \, (1+ 2 \log d) \leq 1$.
\end{remark}

\subsection{Tail Bounds for the Spectral Norm}

The moment bounds in Theorem~\ref{thm:pnorm} can also be upgraded
to obtain tail bounds for $\norm{\mtx{Z}_n}$ and $\norm{\mtx{Z}_n - \E \mtx{Z}_n}$.

\begin{corollary}[Tail Bounds] \label{cor:concentration}
Consider an independent sequence $\{ \mtx{Y}_1, \dots, \mtx{Y}_n \} \subset \M_d$
of random matrices, and form the product $\mtx{Z}_n = \mtx{Y}_n \cdots \mtx{Y}_1$.
Assume that
\[
\norm{ \Expect \mtx{Y}_i } \leq m_i
\quad\text{and}\quad
\norm{ \mtx{Y}_i - \Expect \mtx{Y}_i } \leq \sigma_i m_i
\quad\text{almost surely for $i = 1, \dots, n$.}
\]
Let $M = \prod_{i=1}^n m_i$ and $v = \sum_{i=1}^n \sigma_i^2$.
Then
\begin{align} 
\Prob{ \norm{ \mtx{Z}_n } \geq t M}
	&\leq d \cdot \exp\left(\frac{-\log^2 t}{2v}\right)
	\quad\text{when $\log t \geq 2v$.} \label{eqn:tail-bound-large}
\intertext{Furthermore,}
\Prob{ \norm{ \mtx{Z}_n - \E \mtx{Z}_n } \geq t M}
	&\leq (d \vee \econst) \cdot \exp\left(\frac{-t^2}{2\econst^2v}\right)
	\quad\text{when $t \leq \econst$.} \label{eqn:tail-bound-small} \\
\end{align}
\end{corollary}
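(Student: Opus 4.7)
The plan is to convert the $L_q(S_p)$ moment bounds of Theorem~\ref{thm:pnorm} into tail bounds via Markov's inequality, using $\norm{\mtx{A}} \leq \norm{\mtx{A}}_p$ to pass from the spectral norm to the Schatten $p$-norm. Taking $\mtx{Z}_0 = \Id$ so that $\norm{\mtx{Z}_0}_p = d^{1/p}$, the almost-sure bound $\norm{\mtx{Y}_i - \E \mtx{Y}_i} \leq \sigma_i m_i$ verifies the Schatten moment hypotheses of Theorem~\ref{thm:pnorm} for every $q \geq 2$. Markov then gives, for every $2 \leq q \leq p$,
\[
\Prob{\norm{\mtx{Z}_n} \geq tM} \leq \frac{\triplenorm{\mtx{Z}_n}_{p,q}^q}{(tM)^q} \leq t^{-q} \econst^{q \cnst{C}_p v/2} d^{q/p},
\]
together with the analogous inequality for $\norm{\mtx{Z}_n - \E\mtx{Z}_n}$ in which $\econst^{\cnst{C}_p v/2}$ is replaced by $(\econst^{\cnst{C}_p v}-1)^{1/2}$. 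The remaining task is to optimize in $p$ and $q$.

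For the growth bound~\eqref{eqn:tail-bound-large}, I would take $q = p$ to absorb the dimensional factor into $d^{q/p} = d$, reducing the log-probability bound to $\log d + p(p-1)v/2 - p\log t$. Setting the derivative in $p$ to zero selects $p = (\log t)/v + 1/2$, and a direct substitution shows the exponent equals $\log d - \log^2 t/(2v) - (\log t)/2 - v/8$. The hypothesis $\log t \geq 2v$ forces $p \geq 5/2$, legitimizing this choice of $p$, and dropping the two non-positive lower-order terms yields exactly $d \exp(-\log^2 t /(2v))$.

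For the concentration bound~\eqref{eqn:tail-bound-small}, the choice $q = p$ produces
\[
\Prob{\norm{\mtx{Z}_n - \E \mtx{Z}_n} \geq tM} \leq d \cdot \bigl((\econst^{(p-1)v}-1)^{1/2}/t\bigr)^p.
\]
Under the assumption $(p-1)v \leq 1$, the inequality $\econst^{\alpha} - 1 \leq \econst \alpha$ simplifies this to $d \cdot (\econst (p-1) v/t^2)^{p/2}$. I would then pick $p - 1 = t^2/(\econst^2 v)$, which makes $\econst(p-1)v/t^2 = 1/\econst$ and gives $d \cdot \econst^{-p/2} \leq d \exp(-t^2/(2\econst^2 v))$. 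The choice requires $(p-1)v = t^2/\econst^2 \leq 1$, which is exactly the hypothesized $t \leq \econst$, together with $p \geq 2$, i.e., $t \geq \econst \sqrt{v}$. The main obstacle is the residual range $t < \econst\sqrt{v}$ where the optimal $p$ dips below $2$ and the argument breaks; but this regime is handled vacuously, since here $\exp(-t^2/(2\econst^2 v)) > \econst^{-1/2}$, so $(d \vee \econst)\exp(-t^2/(2\econst^2 v)) > \sqrt{\econst} > 1$ and the conclusion is automatic. This trivial case is precisely what necessitates the $d \vee \econst$ factor in place of $d$.
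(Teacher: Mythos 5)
Your argument is correct and follows the paper's own route: Markov's inequality applied to the $L_p(S_p)$ moment bounds of Theorem~\ref{thm:pnorm} with $\mtx{Z}_0 = \Id$ and $q = p$, an (approximately) optimal choice of $p$ in each case, and a trivial disposal of the small-$t$ regime via the $d \vee \econst$ factor. The only differences are cosmetic: you optimize $p$ exactly (taking $p = \log t/v + 1/2$, resp.\ $p = 1 + t^2/(\econst^2 v)$) where the paper takes $p = \log t/v$ with $\cnst{C}_p < p$, resp.\ $p = t^2/(\econst^2 v)$, yielding the same stated bounds.
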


\begin{proof}
We begin with the proof of~\eqref{eqn:tail-bound-large}.
By homogeneity, we may assume that $m_i = 1$ for each $i$, so also $M = 1$.
Apply Markov's inequality and~\eqref{eqn:moment-method} to obtain
\[
\Prob{ \norm{ \mtx{Z}_n } \geq t }
	\leq \inf_{p \geq 2}\ t^{-p}\cdot \E\norm{\mtx{Z}_n}^p
	\leq \inf_{p \geq 2}\ t^{-p}\cdot \triplenorm{ \mtx{Z}_n }_{p,p}^p.
\]
To bound the $L_p(S_p)$ norm, we will use Theorem~\ref{thm:pnorm}
with $\mtx{Z}_0 = \Id$ and with $q = p$.  Relation~\eqref{eq:master_norm1}
gives
\[
t^{-p} \cdot \triplenorm{ \mtx{Z}_n }_{p,p}^p
	\leq t^{-p} \cdot \econst^{p \cnst{C}_p v / 2} \norm{ \mtx{Z}_0 }_p^p
	= d \cdot \big( t^{-2} \econst^{\cnst{C}_p v} \big)^{p/2}.
\]
We have used the fact that $\norm{\mtx{Z}_0}_p^p = \norm{\Id}_p^p = d$.
Under the assumption that $\log t \geq 2v$,
we may select $p = (\log t) / v \geq 2$.  This choice yields
\[
d \cdot \big( t^{-2} \econst^{pv} \big)^{p/2}
	= d \cdot \exp\left( \frac{-\log^2 t}{2v} \right).
\]
Sequence the last three displays to arrive at the bound~\eqref{eqn:tail-bound-large}.

We establish~\eqref{eqn:tail-bound-small} in an analogous fashion.
The same argument, using relation~\eqref{eq:master_norm2}, implies that
\[
\Prob{ \norm{ \mtx{Z}_n - \E \mtx{Z}_n } \geq t }
	\leq \inf_{p \geq 2}\ d \cdot \left[ t^{-2} \big( \econst^{\cnst{C}_p v} - 1 \big) \right]^{p/2}.
\]
Supposing that $t^2 / (\econst^2 v) < 2$,
the bound~\eqref{eqn:tail-bound-small} holds trivially because $\econst \cdot \exp(-t^2/(2\econst^2 v)) \geq 1$.
Otherwise, we may select the parameter $p = t^2 / (\econst^2 v) \geq 2$.
Under the assumption that $t \leq \econst$,
$\cnst{C}_p v \leq pv \leq (t/\econst)^2 \leq 1$, so that
$\econst^{\cnst{C}_p v} - 1 \leq \econst \cnst{C}_p v \leq t^2/\econst$.
Therefore,
\[
d \cdot \big[ t^{-2} \big( \econst^{\cnst{C}_p v} - 1 \big) \big]^{p/2}
	\leq d \cdot \econst^{-p/2}
	= d \cdot \exp \left( \frac{-t^2}{2\econst^2 v} \right).
\]
The last two displays imply~\eqref{eqn:tail-bound-small}.
\end{proof}

\begin{remark}[Uniform Bounds on Factors]
In the setting of Remark~\ref{rmk:strong_assumption_2},
we have an unconditional variant of the concentration bound~\eqref{eqn:tail-bound-small}:
\begin{equation}
\Prob{ \norm{ \mtx{Z}_n - \E \mtx{Z}_n } \geq t \cdot B}
	\leq (d \vee \econst) \cdot \exp\left(\frac{-t^2}{2\econst v}\right)
	\quad\text{for all $t > 0$.}
\end{equation}
\end{remark}

\section{Application: Random Perturbations of the Identity}
\label{sec:perturbations}

This section treats the fundamental case where the factors $\mtx{Y}_i$ in the product
are independent, random perturbations of the identity.
That is, $\mtx{Y}_i = \Id + \mtx{X}_i$ where $\{ \mtx{X}_i \} \subset \M_d$ is an independent family.
We will develop specialized theory for this class of problems,
and we will use these results to compare our work with several recent papers.

\subsection{Iterative Algorithms}

To motivate this development, observe that random perturbations of the identity
arise from the analysis of the iterative scheme 
\begin{equation}\label{eqt:iterative_scheme}
\vct{u}^{(i+1)} = \vct{u}^{(i)} + \mtx{X}_i\vct{u}^{(i)}
\quad\text{for $i=1,2, 3\dots$.}
\end{equation}
where $\mtx{X}_i\vct{u}^{(i)}$ is a linear update to the current iterate $\vct{u}^{(i)}$.
In this application, the norm of each $\mtx{X}_i$ is proportional to the step size of the scheme,
so it is typically small and it is controlled by the user.
For example, the updates in Oja's algorithm~\cite{Oja82:Simplified-Neuron}
take the form~\eqref{eqt:iterative_scheme}.

For now, we do not permit the random matrix $\mtx{X}_i$ to depend on the sequence $\{ \vct{u}^{(i)} \}$ of iterates.
Later, in Section~\ref{sec:adapted}, we describe an extension of our approach to
the setting where $\{ \mtx{X}_i \}$ is an adapted sequence.  This variant
allows for the study of a wider class of iterative algorithms.

\subsection{Bounds for the Product}

First, we develop bounds for the growth and concentration of
a product of perturbations of the identity.
In Section~\ref{sec:inverses}, we develop results for the inverse of the product.

\begin{corollary}[Perturbations of the Identity] \label{cor:expectation_Oja}
Consider an independent family $\{ \mtx{X}_1, \dots, \mtx{X}_n \} \subset \M_d$ of random matrices,
and form the product $\mtx{Z}_n = (\Id + \mtx{X}_n) \cdots (\Id + \mtx{X}_1)$.
Assume that
\[
\norm{ \E \mtx{X}_i } \leq \xi_i
\quad\text{and}\quad
\norm{ \mtx{X}_i - \E \mtx{X}_i } \leq \sigma_i
\quad\text{almost surely for $i = 1, \dots, n$.}
\]
Define $\xi = \sum_{i=1}^n \xi_i$ and $v = \sum_{i=1}^n \sigma_i^2$.  Then
\begin{align}
\E \norm{\mtx{Z}_n} &\leq \exp\left( \xi + \sqrt{2 v \log d} \right)
	&&\text{when $2v \leq \log d$;}
	\label{eqt:expectation_Oja_1} \\
\E \norm{\mtx{Z}_n - \E \mtx{Z}_n} &\leq  \econst^{\xi + 1} \sqrt{v\, (1 + 2 \log d)}
	&&\text{when $v \, (1 + 2\log d) \leq 1$.}
	\label{eqt:expectation_Oja_2}
\intertext{Moreover,}
\Prob{ \norm{\mtx{Z}_n} \geq t \econst^{\xi} }
	&\leq d \cdot \exp\left( \frac{-\log^2 t}{2v} \right)
	&&\text{when $\log t \geq 2v$};
	\label{eqt:tail_Oja_1} \\
\Prob{ \norm{\mtx{Z}_n - \E \mtx{Z}_n} \geq t \econst^{\xi} }
	&\leq (d \vee \econst) \cdot \exp\left(\frac{-t^2}{2\econst^2 v}\right)
	&&\text{when $t \leq \econst$.}
	\label{eqt:tail_Oja_2}
\end{align}
\end{corollary}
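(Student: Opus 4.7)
The plan is to reduce Corollary~\ref{cor:expectation_Oja} to the general results already established, namely Corollary~\ref{cor:expectation} for the expectation bounds and Corollary~\ref{cor:concentration} for the tail bounds. First I would set $\mtx{Y}_i := \Id + \mtx{X}_i$, so that $\mtx{Z}_n = \mtx{Y}_n \cdots \mtx{Y}_1$ matches the template in those corollaries, with $\E \mtx{Y}_i = \Id + \E \mtx{X}_i$ and $\mtx{Y}_i - \E \mtx{Y}_i = \mtx{X}_i - \E \mtx{X}_i$. In particular, $\norm{\E \mtx{Y}_i} \leq 1 + \xi_i$ and $\norm{\mtx{Y}_i - \E \mtx{Y}_i} \leq \sigma_i$ almost surely.

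Next, I would choose the mean parameters in the general results to be $m_i := 1 + \xi_i$, which dominates $\norm{\E \mtx{Y}_i}$, and set the relative-deviation parameters to $\sigma_i' := \sigma_i / m_i$, which then satisfies the almost sure requirement $\norm{\mtx{Y}_i - \E \mtx{Y}_i} \leq \sigma_i' m_i$. Two elementary observations then control the aggregate quantities that enter the general bounds: the numerical inequality $1 + a \leq \econst^a$ yields
$$
M := \prod_{i=1}^n m_i = \prod_{i=1}^n (1 + \xi_i) \leq \exp\Bigl( \sum_{i=1}^n \xi_i \Bigr) = \econst^{\xi},
$$
while $m_i \geq 1$ yields $v' := \sum_{i=1}^n (\sigma_i')^2 \leq \sum_{i=1}^n \sigma_i^2 = v$.

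With these reductions in hand, each of the four bounds follows by plugging the chosen parameters into the appropriate general result and invoking $M \leq \econst^{\xi}$ and $v' \leq v$. For~\eqref{eqt:expectation_Oja_1}, the hypothesis $2v \leq \log d$ forces $2v' \vee \log d = \log d$, so Corollary~\ref{cor:expectation} gives $\E \norm{\mtx{Z}_n} \leq \exp\bigl(\sqrt{2 v' \log d}\bigr) \cdot M \leq \exp\bigl(\xi + \sqrt{2 v \log d}\bigr)$. For~\eqref{eqt:expectation_Oja_2}, the condition $v\,(1 + 2\log d) \leq 1$ implies the analogous condition for $v'$, and the bound becomes $\econst \sqrt{v' (1 + 2 \log d)}\cdot M \leq \econst^{\xi + 1} \sqrt{v\,(1 + 2 \log d)}$. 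The two tail bounds follow in exactly the same way from Corollary~\ref{cor:concentration}, after observing that $\{\norm{\mtx{Z}_n} \geq t \econst^{\xi}\} \subseteq \{\norm{\mtx{Z}_n} \geq t M\}$ and likewise for the centered version, so threshold levels expressed in terms of $\econst^\xi$ inherit the probability bound stated for $tM$.

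The only real obstacle is bookkeeping: matching the conventions of the general theorems (mean $m_i$ and relative deviation $\sigma_i / m_i$) to the specialized parametrization of the corollary (perturbation mean bound $\xi_i$ and perturbation deviation bound $\sigma_i$), and confirming that every stated hypothesis in terms of $\xi$ and $v$ is strong enough to imply the hypothesis in terms of $v'$. Nothing deeper than $1 + a \leq \econst^a$ is needed to turn the additive mean budget $\sum_i \xi_i$ into the multiplicative factor $\econst^{\xi}$ that appears throughout.
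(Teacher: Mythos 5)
Your proposal is correct and follows essentially the same route as the paper: set $\mtx{Y}_i = \Id + \mtx{X}_i$ and invoke Corollary~\ref{cor:expectation} and Corollary~\ref{cor:concentration}, using $1 + a \leq \econst^a$ to convert the additive mean budget into $\econst^{\xi}$. The only cosmetic difference is that the paper takes $m_i := \econst^{\xi_i}$ at the outset (so $M = \econst^{\xi}$ and $\sigma_i \leq \sigma_i m_i$ directly), whereas you take $m_i = 1+\xi_i$ with $\sigma_i' = \sigma_i/m_i$ and absorb the slack at the end; both verifications are sound.
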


\begin{proof}
Let $\mtx{Y}_i = \Id+\mtx{X}_i$ for each index $i$.  Then
\[
\norm{ \E \mtx{Y}_i } \leq 1 + \norm{ \E \mtx{X}_i } \leq \econst^{\xi_i} =: m_i.
\]
Furthermore, since $m_i \geq 1$,
\[
\norm{ \mtx{Y}_i-\E \mtx{Y}_i } = \norm{ \mtx{X}_i-\E \mtx{X}_i } \leq \sigma_i \leq \sigma_i m_i.
\]
The results follow instantly from Corollary \ref{cor:expectation} and Corollary \ref{cor:concentration}.
\end{proof}

\subsection{Comparison with Prior Work}

To clarify the meaning of Corollary~\ref{cor:expectation_Oja},
let us elaborate what it predicts when
\begin{equation*} \label{eqn:perturb-example}
\norm{ \E \mtx{X}_i } \leq T/n
\quad\text{and}\quad
\norm{ \mtx{X}_i - \E \mtx{X}_i } \leq L/n
\quad\text{for constants $T, L$.}
\end{equation*}
This situation can arise if we perform $n$ iterations of
the iterative scheme~\eqref{eqt:iterative_scheme} with a
uniform step size of $1/n$.
In this setting, Corollary~\ref{cor:expectation_Oja} implies that
\begin{equation} \label{eqt:Henriksen_Ward_1}
\E \norm{ \mtx{Z}_n - \E \mtx{Z}_n } \leq \sqrt{\frac{1 + 2\log d}{n}} L \econst^{1 + T}
\quad\text{when $L^2 (1 + 2 \log d) \leq n$.}
\end{equation}
For $\delta \in [0,1]$, with probability at least $1 - \delta$,
\begin{equation} \label{eqt:Henriksen_Ward_2}
\norm{ \mtx{Z}_n - \E \mtx{Z}_n } \leq \sqrt{\frac{2 + 2\log(d/\delta)}{n}} L \econst^{1 + T}
\quad\text{when $L^2 (2 + 2 \log (d/\delta)) \leq n$.}
\end{equation}
Furthermore, if we assume that $\norm{ \mtx{X}_i } \leq T/n$ almost surely for each $i$,
then Remark~\ref{rmk:strong_assumption_2} implies that \eqref{eqt:Henriksen_Ward_1}
and \eqref{eqt:Henriksen_Ward_2} hold without restriction.

The paper~\cite{henriksen2019concentration} of Henriksen and Ward only contemplates
the situation described in the last paragraph.  It obtains a concentration bound of
the form
$$
\norm{ \mtx{Z}_n - \E \mtx{Z}_n } \leq \frac{L \econst^L}{\sqrt{n}} \cdot \mathrm{polylog}(n,d,1/\delta)
\quad\text{with probability at least $1 - \delta$.}
$$
The salient improvement in~\eqref{eqt:Henriksen_Ward_2} stems from the reduction of the factor
$\econst^L$ to $\econst^T$.
This difference is most pronounced when $\E \mtx{X}_i = 0$ for each $i$, in which case the bound~\eqref{eqt:Henriksen_Ward_2} removes the exponential factor entirely.
Even under the assumption that $\mtx{X}_i \psdge \mtx{0}$ for all each $i$, it can happen that $L \geq dT$, so this refinement can make a big difference.

Last, we mention one instance that has special importance.  Let $\mtx{A} \in \M_d$ be a fixed
matrix.  Consider a triangular array $\{ \mtx{X}_i^{(n)} : \text{$i \leq n$ and $n \in \N$} \} \subset \M_d$
of independent random matrices.  For each index $n$, assume that
$$
\Expect \mtx{X}_i^{(n)} = \mtx{A} / n
\quad\text{and}\quad
\norm{ \smash{ \mtx{X}_i^{(n)} - \E \mtx{X}_i^{(n)} } } \leq L/n
\quad\text{for $i = 1, \dots, n$.}
$$
Define the product
$$
\mtx{Z}^{(n)} = \big(\Id + \mtx{X}_n^{(n)} \big) \cdots \big(\Id + \mtx{X}_1^{(n)}\big).
$$
By functional calculus,
$$
\Expect \mtx{Z}^{(n)} = (\Id + \mtx{A} / n)^n \to \econst^{\mtx{A}}
\quad\text{as $n \to \infty$.} 
$$
The bound~\eqref{eqt:Henriksen_Ward_2}, combined with the first Borel--Cantelli Lemma,
guarantees that
$$
\mtx{Z}^{(n)} \to \econst^{\mtx{A}}
\quad\text{as $n \to \infty$, almost surely.}
$$
This result is a special case of the limit theorem of Emme and Hubert~\cite[Thm.~1.1]{emme2017limit}.
They do not require independence, but they only achieve an asymptotic result.
Our analysis gives a rate of convergence that matches the corresponding bound~\eqref{eq:concentration_inf} for scalar random variables.

\subsection{Bounds for the Inverse of a Product}
\label{sec:inverses}

In some applications, it is valuable to have a lower bound for the minimum
singular value of a random product.  Equivalently, we can seek an upper bound
for the spectral norm of the inverse of the product.  This section describes
a situation where clean results are possible.

Consider the case where the factors $\mtx{Y}_i$ are
perturbations of the identity: $\mtx{Y}_i = \Id + \mtx{X}_i$,
where $\mtx{X}_i$ is small enough to ensure that $\mtx{Y}_i$
is invertible with probability $1$.  In this setting,
we can easily study the inverse of the product
using Corollary~\ref{cor:expectation_Oja}.

\begin{corollary}[Perturbations of the Identity: Inverses] \label{cor:expectation_Oja_inverse}
Frame the same hypotheses as in Corollary \ref{cor:expectation_Oja}.
Assume that $\xi_i+\sigma_i<1$ for each index $i$, and define
\[
\bar{\xi} = \sum_{i=1}^n \left[ \xi_i + \frac{(\xi_i + \sigma_i)^2}{1 - (\xi_i + \sigma_i)} \right]
\quad\text{and}\quad
\bar{v} = \sum_{i=1}^n \left[ \sigma_i + \frac{2 (\xi_i + \sigma_i)^2}{1 - (\xi_i + \sigma_i)} \right]^2.
\]
Then
\begin{align*}
\E \norm{\mtx{Z}_n^{-1}} &\leq \exp\left( \bar{\xi} + \sqrt{2 \bar{v} \log d} \right)
	&&\text{when $2 \bar{v} \leq \log d$;} \\
\E \norm{\mtx{Z}_n^{-1} - \E \mtx{Z}_n^{-1}} &\leq  \econst^{\bar{\xi}} \sqrt{\econst^2 \bar{v}\, (1 + 2 \log d)}
	&&\text{when $\bar{v} \, (1 + 2\log d) \leq 1$.}
\end{align*}
\end{corollary}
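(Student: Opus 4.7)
The plan is to reduce Corollary~\ref{cor:expectation_Oja_inverse} to the already-proved Corollary~\ref{cor:expectation_Oja} by rewriting $\mtx{Z}_n^{-1}$ as a product of perturbations of the identity and extracting the per-factor mean and deviation parameters from $\xi_i$ and $\sigma_i$.

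First I would observe that $\norm{\mtx{X}_i} \leq \norm{\E\mtx{X}_i} + \norm{\mtx{X}_i - \E\mtx{X}_i} \leq \xi_i + \sigma_i < 1$ almost surely, so each $\mtx{Y}_i := \Id + \mtx{X}_i$ is invertible and
$$
\mtx{Z}_n^{-1} = \mtx{Y}_1^{-1}\mtx{Y}_2^{-1}\cdots\mtx{Y}_n^{-1} = (\Id+\tilde{\mtx{X}}_1)(\Id+\tilde{\mtx{X}}_2)\cdots(\Id+\tilde{\mtx{X}}_n),
$$
where $\tilde{\mtx{X}}_i := \mtx{Y}_i^{-1} - \Id$. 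Up to a harmless reindexing, this has the form required by Corollary~\ref{cor:expectation_Oja}, so it suffices to exhibit constants $\bar{\xi}_i$ and $\bar{\sigma}_i$ (matching the brackets in the statement) such that $\norm{\E\tilde{\mtx{X}}_i} \leq \bar{\xi}_i$ and $\norm{\tilde{\mtx{X}}_i - \E\tilde{\mtx{X}}_i} \leq \bar{\sigma}_i$ almost surely.

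Second, I would isolate the linear part of $\tilde{\mtx{X}}_i$ via the algebraic identity
$$
(\Id + \mtx{X}_i)^{-1} = \Id - \mtx{X}_i + \mtx{X}_i^2(\Id+\mtx{X}_i)^{-1},
$$
which holds trivially by multiplication and is meaningful since $\norm{\mtx{X}_i}<1$. Thus $\tilde{\mtx{X}}_i = -\mtx{X}_i + \mtx{R}_i$ with $\mtx{R}_i := \mtx{X}_i^2\mtx{Y}_i^{-1}$, and the operator bound $\norm{\mtx{Y}_i^{-1}} \leq (1-\norm{\mtx{X}_i})^{-1}$ yields the almost-sure estimate $\norm{\mtx{R}_i} \leq (\xi_i+\sigma_i)^2/(1-(\xi_i+\sigma_i))$. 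Applying the triangle inequality to $\E\tilde{\mtx{X}}_i$ gives
$$
\norm{\E\tilde{\mtx{X}}_i} \leq \norm{\E\mtx{X}_i} + \E\norm{\mtx{R}_i} \leq \xi_i + \frac{(\xi_i+\sigma_i)^2}{1-(\xi_i+\sigma_i)} = \bar{\xi}_i,
$$
while the centering identity $\tilde{\mtx{X}}_i - \E\tilde{\mtx{X}}_i = -(\mtx{X}_i - \E\mtx{X}_i) + (\mtx{R}_i - \E\mtx{R}_i)$ together with the same almost-sure bound on $\mtx{R}_i$ applied both pointwise and in expectation yields
$$
\norm{\tilde{\mtx{X}}_i - \E\tilde{\mtx{X}}_i} \leq \sigma_i + \frac{2(\xi_i+\sigma_i)^2}{1-(\xi_i+\sigma_i)} = \bar{\sigma}_i
\quad\text{almost surely.}
$$
Since $\bar{\xi} = \sum_i \bar{\xi}_i$ and $\bar{v} = \sum_i \bar{\sigma}_i^2$, plugging these per-factor bounds into inequalities~\eqref{eqt:expectation_Oja_1} and~\eqref{eqt:expectation_Oja_2} of Corollary~\ref{cor:expectation_Oja} delivers both claimed estimates directly.

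The step I expect to be most delicate is the Neumann-series split in the middle: a naive bound $\norm{\tilde{\mtx{X}}_i - \E\tilde{\mtx{X}}_i} \leq 2\norm{\mtx{Y}_i^{-1} - \Id} \leq 2(\xi_i+\sigma_i)/(1-(\xi_i+\sigma_i))$ would be linear in $\xi_i+\sigma_i$ rather than in $\sigma_i$, destroying the scaling of Corollary~\ref{cor:expectation_Oja} when $\sigma_i$ is small compared to $\xi_i$. Isolating the linear term $-\mtx{X}_i$ is precisely what preserves the first-order variance contribution $\sigma_i$ and relegates the $\xi_i+\sigma_i$ dependence to a second-order correction.
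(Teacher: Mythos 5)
Your proposal is correct and matches the paper's own proof essentially line for line: the same factorization $\mtx{Z}_n^{-1} = (\Id+\mtx{X}_1)^{-1}\cdots(\Id+\mtx{X}_n)^{-1}$, the same split $(\Id+\mtx{X}_i)^{-1} = \Id - \mtx{X}_i + \mtx{X}_i^2(\Id+\mtx{X}_i)^{-1}$ isolating the linear term, the same almost-sure bound on the quadratic remainder via $\norm{(\Id+\mtx{X}_i)^{-1}} \leq (1-(\xi_i+\sigma_i))^{-1}$, and the same reduction to Corollary~\ref{cor:expectation_Oja} with the parameters $\bar{\xi}_i$ and $\bar{\sigma}_i$. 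Your closing remark about why one must keep the linear term $-\mtx{X}_i$ separate (rather than crudely bounding $\mtx{Y}_i^{-1}-\Id$) correctly identifies the point of the paper's decomposition.
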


\begin{proof}
With the same notation as in Corollary~\ref{cor:expectation_Oja},
observe that $\mtx{Z}_n^{-1} = (\Id + \mtx{X}_1)^{-1} \cdots (\Id + \mtx{X}_n)^{-1}$.
This is an independent product that can be bounded by applying the corollary.
To do so, we simply need to express $(\Id + \mtx{X}_i)^{-1} = \Id + \bar{\mtx{X}}_i$
for suitable random matrices $\bar{\mtx{X}}_i$.
The perturbation terms $\bar{\mtx{X}}_i$ are obtained from the calculation
\[
(\Id + \mtx{X}_i)^{-1} = \Id + \sum_{k=1}^\infty (-1)^k \mtx{X}_i^k
	= \Id - \mtx{X}_i + \mtx{X}_i^2 (\Id + \mtx{X}_i)^{-1}
	=: \Id + \bar{\mtx{X}}_i.
\]
It remains to develop estimates for the size of the perturbation.

The uniform bound $\norm{ \mtx{X}_i } \leq \norm{ \E \mtx{X}_i } + \norm{ \mtx{X}_i - \E \mtx{X}_i } \leq \xi_i + \sigma_i < 1$ implies that
\[
\norm{ (\Id + \mtx{X}_i)^{-1} } \leq \big( 1 - \norm{\mtx{X}_i} \big)^{-1} \leq \frac{1}{1 - (\xi_i + \sigma_i)}.
\]
Therefore, the norm of the expected perturbation satisfies
\[
\norm{ \E \bar{\mtx{X}}_i } \leq \norm{ \E \mtx{X}_i } + \norm{ \E \big[ \mtx{X}_i^2 (\Id + \mtx{X}_i)^{-1} \big] }
	\leq \xi_i + \frac{(\xi_i + \sigma_i)^2}{1 - (\xi_i + \sigma_i)}
	=: \bar{\xi}_i.
\]
The fluctuations of the perturbation satisfy
\[
\norm{ \bar{\mtx{X}_i} - \E \bar{\mtx{X}_i} }
	\leq \norm{ \mtx{X}_i - \E \mtx{X}_i } + 2 \norm{ \mtx{X}_i^2 (\Id + \mtx{X}_i)^{-1} }
	\leq \sigma_i + \frac{2(\xi_i + \sigma_i)^2}{1 - (\xi_i + \sigma_i)}
	=: \bar{\sigma}_i.
\]
The results follow when we apply Corollary~\ref{cor:expectation_Oja}
with the random matrices $\bar{\mtx{X}}_i$ in place of the $\mtx{X}_i$.
\end{proof}

\section{Improvements and Extensions}
\label{sec:extensions}

The argument underlying Theorem~\ref{thm:pnorm} has several natural extensions.
First, we develop sharper results for products of random contractions.
In Section~\ref{sec:compression}, we derive better estimates for a
matrix product where the initial term is rectangular.
In Section~\ref{sec:adapted}, we document the changes that are
necessary in case the factors in the product are not independent
but form an adapted sequence.  Last, In Section~\ref{sec:spectral-radius},
we explain how to develop a bound on the spectral radius of a product.

\subsection{A Product of Contractions}

Most of our results are designed for products of general random matrices.
In some circumstances, the factors in the product are \emph{contractions},
matrices whose singular values are bounded by one.  For example, the
randomized Kaczmarz algorithm~\cite{SV09:Randomized-Kaczmarz} can be
expressed as the repeated application of random contractions.  Other
randomized linear fixed-point iterations take a similar form.  This
section derives sharper estimates for this important setting.

\begin{theorem}[Product of Contractions] \label{thm:contract}
Consider an independent family $\{ \mtx{Y}_1, \dots, \mtx{Y}_n \} \subset \M_d$
of random contractions; that is, $\norm{ \mtx{Y}_i } \leq 1$.  Form the
random product $\mtx{Z}_n = \mtx{Y}_n \cdots \mtx{Y}_1$.
Assume that
$$
\norm{ \E \abs{\mtx{Y}_i}^2 } \leq m_i^2 \leq 1
\quad\text{and}\quad
\norm{ \mtx{Y}_i - \E \mtx{Y}_i } \leq \sigma_i m_i 
\quad\text{almost surely for $i = 1, \dots, n$.}
$$
Define $M := \prod_{i=1}^n m_i$ and $v := \sum_{i=1}^n \sigma_i^2$.  Then 
\begin{align}
\E \norm{ \mtx{Z}_n } &\leq 1 \wedge (\sqrt{d} \cdot M ); \label{eqn:contract-mean}\\
\E \norm{ \mtx{Z}_n - \E \mtx{Z}_n } &\leq \sqrt{dv} \cdot M. 
\label{eqn:contract-conc}
\end{align}
Furthermore, we have the tail bound 
\begin{equation} \label{eqn:contract-tail}
\Prob{ \norm{ \mtx{Z}_n - \E \mtx{Z}_n } \geq t }
	\leq dM^2 \cdot \econst^{-t^2 /(2\econst v)}
	\quad\text{when $t^2 \geq 2\econst v$.}
\end{equation}
\end{theorem}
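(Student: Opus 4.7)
The plan is to exploit two special features of the contraction hypothesis $\norm{\mtx{Y}_i} \leq 1$. First, operator Jensen gives $(\E\mtx{Y}_i)^*(\E\mtx{Y}_i) \psdle \E \abs{\mtx{Y}_i}^2$, hence $\norm{\E\mtx{Y}_i}^2 \leq \norm{\E\abs{\mtx{Y}_i}^2} \leq m_i^2$. Second, for any contraction $\mtx{Z}$ and $p \geq 2$, a term-by-term comparison of singular values yields $\norm{\mtx{Z}}_p^p \leq \norm{\mtx{Z}}_2^2$; this allows a Frobenius growth estimate to upgrade to every Schatten norm. Throughout I take $\mtx{Z}_0 = \Id$, so that $\norm{\mtx{Z}_0}_2^2 = d$, and I set $M_i := \prod_{k=1}^i m_k$.

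For the growth bound~\eqref{eqn:contract-mean}, the trace identity and independence give
\[
\E \norm{\mtx{Z}_i}_2^2 = \E \trace\!\bigl(\mtx{Z}_{i-1}^*\, \E\abs{\mtx{Y}_i}^2\, \mtx{Z}_{i-1}\bigr) \leq \norm{\E \abs{\mtx{Y}_i}^2}\cdot \E\norm{\mtx{Z}_{i-1}}_2^2 \leq m_i^2 \, \E\norm{\mtx{Z}_{i-1}}_2^2.
\]
Iterating yields $\E\norm{\mtx{Z}_n}_2^2 \leq dM^2$, whence Jensen together with $\norm{\cdot} \leq \norm{\cdot}_2$ produces $\E\norm{\mtx{Z}_n} \leq \sqrt{d}\,M$. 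The almost-sure bound $\norm{\mtx{Z}_n} \leq 1$ then supplies the minimum.

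For the concentration bound~\eqref{eqn:contract-conc}, set $\mtx{W}_i := \mtx{Z}_i - \E\mtx{Z}_i$ and use the decomposition~\eqref{eqn:conc-decomp}. Proposition~\ref{prop:Ricard_Xu} at $p=q=2$ reduces to the parallelogram law ($\cnst{C}_2 = 1$); combining it with the operator ideal property, the almost-sure bound $\norm{\mtx{Y}_i - \E\mtx{Y}_i} \leq \sigma_i m_i$, and the growth estimate just established produces the recursion
\[
\E\norm{\mtx{W}_i}_2^2 \leq m_i^2 \, \E\norm{\mtx{W}_{i-1}}_2^2 + \sigma_i^2 m_i^2 \, M_{i-1}^2\, d.
\]
Dividing by $M_i^2$ makes this telescope to $\E\norm{\mtx{W}_n}_2^2 \leq d v M^2$, which yields~\eqref{eqn:contract-conc} via Jensen.

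For the tail bound~\eqref{eqn:contract-tail}, I would pass to a Schatten $p$-moment bound and apply Markov: $\Prob{\norm{\mtx{W}_n} \geq t} \leq t^{-p}\, \E\norm{\mtx{W}_n}_p^p$. Running the previous argument with $q = p$ gives
\[
\triplenorm{\mtx{W}_i}_{p,p}^2 \leq m_i^2 \triplenorm{\mtx{W}_{i-1}}_{p,p}^2 + \cnst{C}_p \sigma_i^2 m_i^2 \triplenorm{\mtx{Z}_{i-1}}_{p,p}^2,
\]
and the interpolation $\norm{\mtx{Z}_{i-1}}_p^p \leq \norm{\mtx{Z}_{i-1}}_2^2$ together with the growth bound controls $\triplenorm{\mtx{Z}_{i-1}}_{p,p}^2 \leq (dM_{i-1}^2)^{2/p}$. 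Unrolling produces a sum whose $i$-th term carries the weight $m_i^2 M_{i-1}^{4/p} \prod_{j>i} m_j^2$; the one delicate bookkeeping check---which I expect to be the main obstacle---is that this weight is at most $M^{4/p}$. This reduces to the elementary observation $m_i^{2-4/p} \prod_{j > i} m_j^{2 - 4/p} \leq 1$, which holds because every $m_j \leq 1$ and $2 - 4/p \geq 0$ when $p \geq 2$. The resulting moment bound $\E\norm{\mtx{W}_n}_p^p \leq (\cnst{C}_p v)^{p/2} dM^2$, optimized by choosing $p = t^2/(\econst v) \geq 2$ (legal exactly when $t^2 \geq 2\econst v$) and using $\cnst{C}_p = p-1 \leq p$, produces the advertised $dM^2 \econst^{-t^2/(2\econst v)}$.
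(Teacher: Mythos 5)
Your argument is correct, and it reaches exactly the paper's intermediate estimates, but it gets there by a different key step. The paper's proof of this theorem runs through Lemma~\ref{lem:contractions}, a per-factor estimate $\triplenorm{\mtx{YZ}}_{p,q} \leq \norm{\E\abs{\mtx{Y}}^2}^{1/p}\triplenorm{\mtx{Z}}_{p,q}$ proved with the Araki--Lieb--Thirring inequality (using $\abs{\mtx{Y}}^p \psdle \abs{\mtx{Y}}^2$ for a contraction) plus Jensen; iterating it gives $\triplenorm{\mtx{Z}_i}_{p,p} \leq d^{1/p}\prod_{k\leq i} m_k^{2/p}$, which feeds the same smoothness recursion you write and the same choice $p = t^2/(\econst v)$. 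You avoid Araki--Lieb--Thirring entirely: you prove the growth estimate only in $S_2$ via the trace identity $\E\norm{\mtx{Y}_i\mtx{Z}_{i-1}}_2^2 \leq \norm{\E\abs{\mtx{Y}_i}^2}\,\E\norm{\mtx{Z}_{i-1}}_2^2$ (which is the $p=q=2$ case of the lemma, where no contraction hypothesis is needed), and then transfer it to every Schatten norm through the elementary singular-value comparison $\norm{\mtx{Z}_{i-1}}_p^p \leq \norm{\mtx{Z}_{i-1}}_2^2$, valid because the partial product is itself a contraction. This yields the paper's bound $\triplenorm{\mtx{Z}_{i-1}}_{p,p}^2 \leq (d M_{i-1}^2)^{2/p}$ by more elementary means, and your bookkeeping check $m_i^{2-4/p}\prod_{j>i} m_j^{2-4/p} \leq 1$ is precisely the point where the paper instead weakens $m_i^2 \leq m_i^{4/p}$ before unrolling; the two unrollings give the identical bound $\cnst{C}_p\, d^{2/p} M^{4/p} v$, and the expectation and tail conclusions follow as in the paper. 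What your route buys is a proof that needs nothing beyond the trace identity, the $\psdle$ comparison $\abs{\E\mtx{Y}_i}^2 \psdle \E\abs{\mtx{Y}_i}^2$, and Proposition~\ref{prop:Ricard_Xu}; what the paper's lemma buys is a reusable per-factor contraction estimate stated for all $2 \leq q \leq p$, which is exploited through the contraction property of each factor rather than of the accumulated product, and which could be deployed in settings where one does not want to pass through the $S_2$ bound. For this theorem the two approaches are equivalent in strength.
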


To prove this result, we require a lemma that isolates
the influence of each factor in the product.  This step exploits
the uniform bound on the singular values in an essential way.

\begin{lemma}[Random Contractions] \label{lem:contractions}
Let $\mtx{Y} \in \M_d$ be a random contraction, and let $\mtx{Z} \in \M_d$
be a random matrix that is independent from $\mtx{Y}$.  For $2 \leq q \leq p$,
$$
\triplenorm{ \mtx{YZ} }_{p, q} \leq \norm{ \E \abs{ \mtx{Y} }^2 }^{1/p} \cdot \triplenorm{ \mtx{Z} }_{p,q}.
$$
\end{lemma}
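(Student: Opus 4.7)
The plan is to prove the claim in three steps: first a pointwise bound on $\E_\mtx{Y}\norm{\mtx{YZ}}_p^p$ with $\mtx{Z}$ held fixed; then an upgrade from $q = p$ to $q \in [2,p]$ via Lyapunov's inequality; and finally a Fubini pass to restore the randomness in $\mtx{Z}$.

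For the pointwise bound, I start from the identity $\norm{\mtx{YZ}}_p^p = \trace\bigl((\mtx{Z}^*\abs{\mtx{Y}}^2\mtx{Z})^{p/2}\bigr)$ and introduce the polar decomposition $\mtx{Z} = V\abs{\mtx{Z}}$, where $V^*V$ is the orthogonal projection onto $\range(\abs{\mtx{Z}})$. This lets me write
$$\norm{\mtx{YZ}}_p^p = \trace\bigl((\abs{\mtx{Z}}\, A\,\abs{\mtx{Z}})^{p/2}\bigr), \qquad A := V^*\abs{\mtx{Y}}^2 V \psdge 0.$$
The Araki--Lieb--Thirring inequality, applied with exponent $p/2 \geq 1$, then bounds this by $\trace\bigl(\abs{\mtx{Z}}^p A^{p/2}\bigr)$. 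The contraction hypothesis enters crucially here: $\norm{\mtx{Y}} \leq 1$ gives $\abs{\mtx{Y}}^2 \psdle \Id$, whence $A \psdle V^*V \psdle \Id$, and the scalar inequality $\lambda^{p/2} \leq \lambda$ for $\lambda \in [0,1]$ lifts spectrally to $A^{p/2} \psdle A$. Consequently
$$\norm{\mtx{YZ}}_p^p \leq \trace\bigl(\abs{\mtx{Z}}^p V^*\abs{\mtx{Y}}^2 V\bigr).$$

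Averaging the previous display over $\mtx{Y}$, cycling the trace, and invoking $\E\abs{\mtx{Y}}^2 \psdle \norm{\E\abs{\mtx{Y}}^2}\cdot\Id$ together with the fact that $V^*V$ acts as the identity on the support of $\abs{\mtx{Z}}^p$, I arrive at the pointwise bound
$$\E_\mtx{Y}\norm{\mtx{YZ}}_p^p \leq \norm{\E\abs{\mtx{Y}}^2}\cdot\norm{\mtx{Z}}_p^p.$$
Lyapunov's inequality conditionally on $\mtx{Z}$ then yields $\E_\mtx{Y}\norm{\mtx{YZ}}_p^q \leq \bigl(\E_\mtx{Y}\norm{\mtx{YZ}}_p^p\bigr)^{q/p} \leq \norm{\E\abs{\mtx{Y}}^2}^{q/p}\norm{\mtx{Z}}_p^q$, and averaging over $\mtx{Z}$ via Fubini and taking $q$-th roots gives the stated inequality.

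The main obstacle is the pointwise step: one must sandwich the expression into the form $\abs{\mtx{Z}}(\cdots)\abs{\mtx{Z}}$ suitable for Araki--Lieb--Thirring, and then leverage the contraction hypothesis in the specific form $A^{p/2} \psdle A$ so that the dependence on $\abs{\mtx{Y}}^2$ becomes linear and the operator inequality $\E\abs{\mtx{Y}}^2 \psdle \norm{\E\abs{\mtx{Y}}^2}\cdot\Id$ can be applied. Everything after the pointwise bound is a routine application of Lyapunov and Fubini.
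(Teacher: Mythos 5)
Your proof is correct and follows essentially the same route as the paper's: rewrite $\norm{\mtx{YZ}}_p^p$ as a trace via a polar factorization, apply the Araki--Lieb--Thirring inequality, use the contraction hypothesis to reduce the power on the $\mtx{Y}$-dependent factor back down to a quadratic, bound that factor by $\norm{\E \abs{\mtx{Y}}^2}$ after averaging, and finish with Jensen/Lyapunov (since $q \leq p$) and independence. The minor differences---working with $\abs{\mtx{Z}}$ and the partial isometry $V$ instead of $\abs{\mtx{Z}^*}$, and phrasing the exponent-$q/p$ step as a conditional Lyapunov bound rather than Jensen applied before the norm bound---are purely cosmetic.
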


\begin{proof}
Write out the $L_q(S_p)$ norm, and introduce matrix absolute values:
$$
\triplenorm{ \mtx{YZ} }_{p, q}^q = \E \norm{ \mtx{YZ} }_p^q
	= \E \left[ \trace \big( \mtx{Z}^* \mtx{Y}^* \mtx{Y} \mtx{Z} \big)^{p/2} \right]^{q/p}
	= \E \left[ \trace \Big( \abs{ \mtx{Z}^* } \cdot \abs{\mtx{Y}}^2 \cdot \abs{\mtx{Z}^*} \Big)^{p/2} \right]^{q/p}.
$$
The last relation can be verified using polar factorizations.
Apply the Araki--Lieb--Thirring inequality~\cite[Thm.~IX.2.20]{Bha97:Matrix-Analysis}
to distribute the power onto the factors in the trace.  We obtain
$$
\begin{aligned}
\triplenorm{ \mtx{YZ} }_{p, q}^q
	&\leq \E \left[ \trace\Big( \abs{\mtx{Z}^*}^{p/2} \cdot \abs{\mtx{Y}}^{p} \cdot \abs{\mtx{Z}^*}^{p/2} \Big) \right]^{q/p} \\
	&\leq \E_{\mtx{Z}} \E_{\mtx{Y}} \left[ \trace\Big( \abs{\mtx{Z}^*}^{p/2} \cdot \abs{ \mtx{Y} }^2 \cdot \abs{ \mtx{Z}^* }^{p/2} \Big) \right]^{q/p} \\
	&\leq \E_{\mtx{Z}} \left[ \trace\Big( \abs{ \mtx{Z}^* }^{p/2} \cdot \big(\E_{\mtx{Y}} \abs{\mtx{Y}}^2 \big) \cdot \abs{\mtx{Z}^*}^{p/2} \Big) \right]^{q/p}.
\end{aligned}
$$
The second inequality holds because a contraction satisfies $\abs{\mtx{Y}}^{p} \psdle \abs{ \mtx{Y} }^2$
for each $p \geq 2$.  The third inequality is Jensen's, justified because $q/p \leq 1$.
Bounding the matrix in the center by its norm,
$$
\triplenorm{ \mtx{YZ} }_{p, q}^q
	\leq \norm{ \E \abs{ \mtx{Y} }^2 }^{q/p} \cdot \E \left[ \trace \abs{ \mtx{Z}^* }^{p}  \right]^{q/p}
	= \norm{ \E \abs{ \mtx{Y} }^2 }^{q/p} \cdot \triplenorm{ \mtx{Z} }_{p,q}^q.
$$
This completes the analysis.
\end{proof}

With this result at hand, Theorem~\ref{thm:contract} follows from familiar arguments.

\begin{proof}[Proof of Theorem~\ref{thm:contract}]
Define $\mtx{Z}_0 = \Id$ and $\mtx{Z}_i = \mtx{Y}_i \mtx{Z}_{i-1}$ for each index $i = 1, \dots, n$.
We begin with the proof of~\eqref{eqn:contract-mean}.  
Since each factor is a contraction, it is clear that
$$
\E \norm{ \mtx{Z}_n } \leq \E \prod_{k=1}^n \norm{\mtx{Y}_k} \leq 1.
$$
To obtain a less trivial bound on the expectation,
we apply Lemma~\ref{lem:contractions} repeatedly.  For $p \geq 2$,
\begin{equation} \label{eqn:contract-mean-pf}
\E \norm{ \mtx{Z}_i } \leq \triplenorm{ \mtx{Z}_i }_{p,p}
	\leq \prod_{k=1}^i \norm{ \E \abs{\mtx{Y}_k}^2 }^{1/p} \cdot \triplenorm{ \Id }_{p,p}
	\leq d^{1/p} \prod_{k=1}^i m_k^{2/p}.
\end{equation}
The statement~\eqref{eqn:contract-mean} combines these two observations
when we set $i = n$ and $p = 2$.

Let us continue with the proof of~\eqref{eqn:contract-conc}, which is analogous
to the argument in Theorem~\ref{thm:pnorm}\eqref{eq:master_norm2}.
First, by expanding the inequality $\E \abs{\mtx Y_i - \E \mtx Y_i}^2 \psdge \mtx{0}$, we see that $\mtx{0} \psdle \abs{\E \mtx Y_i}^2 \psdle \E \abs{\mtx Y_i}^2$.
As a consequence,
$$
\norm{\E \mtx{Y}_i}^2 \leq \norm{ \E \abs{\mtx{Y}_i}^2} \leq m_i^2.
$$
For $p \geq 2$, calculate that
$$
\begin{aligned}
\triplenorm{ \mtx{Z}_i - \E \mtx{Z}_i }_{p,p}^2
	&\leq \triplenorm{ (\E \mtx{Y}_i) (\mtx{Z}_{i-1} - \E \mtx{Z}_{i-1} ) }_{p,p}^2 + \cnst{C}_p \cdot \triplenorm{ (\mtx{Y}_i - \E \mtx{Y}_i) \mtx{Z}_{i-1} }_{p,p}^2 \\
	&\leq m_i^{2} \cdot \triplenorm{ \mtx{Z}_{i-1} - \E \mtx{Z}_{i-1} }_{p,p}^2 + \cnst{C}_p \sigma_i^2 m_i^{2} \cdot \triplenorm{\mtx{Z}_{i-1}}_{p,p}^2 \\
	&\leq m_i^{4/p} \cdot \triplenorm{ \mtx{Z}_{i-1} - \E \mtx{Z}_{i-1} }_{p,p}^2 + \cnst{C}_p \sigma_i^2 \cdot  d^{2/p} \prod_{k=1}^{i} m_k^{4/p}.
\end{aligned}
$$
The second inequality is Lemma~\ref{lem:contractions}, and the third inequality requires~\eqref{eqn:contract-mean-pf}.
We have also used the fact that $m_i^{2} \leq m_i^{4/p}$ because $m_i \leq 1$.  Unrolling the recursion,
\begin{equation} \label{eqn:contract-recurse}
\triplenorm{ \mtx{Z}_n - \E \mtx{Z}_n }_{p,p}^2
	\leq \cnst{C}_p d^{2/p} \left( \prod_{i=1}^n m_i^{4/p} \right) \left( \sum_{i=1}^n \sigma_i^2 \right)
	= \cnst{C}_p d^{2/p} M^{4/p} v.
\end{equation}
For $p = 2$, this result implies the advertised bound~\eqref{eqn:contract-conc}.

Finally, the tail inequality~\eqref{eqn:contract-tail} follows from the estimate
$$
\Prob{ \norm{ \mtx{Z}_n - \E \mtx{Z}_n } \geq t } \leq \min_{p \geq 2}\ t^{-p} \cdot \triplenorm{ \mtx{Z}_n - \E \mtx{Z}_n }_{p,p}^p
	\leq (dM^2) \cdot \min_{p \geq 2}\ \left(\frac{pv}{t^2}\right)^{p/2}.
$$
The last inequality follows from~\eqref{eqn:contract-recurse} and $\cnst{C}_p < p$.
Bound the minimum with the power $p = t^2 / (\econst v) \geq 2$ to complete the argument.
\end{proof}

\subsection{Low-Rank Products}
\label{sec:compression}

So far, we have focused on the setting where the initial matrix $\mtx{Z}_0 = \Id$.
In many applications, we are interested in the action of the random product $\mtx{Y}_n \cdots \mtx{Y}_1 \in \M_d$
on a specific matrix $\mtx{Z}_0 \in \CC^{d \times r}$ with relatively few columns.
In this case, the terms that the control the behavior
of the product may be significantly smaller.
Here is an example of the kinds of results
one can achieve.

\begin{theorem}[Growth and Concentration of Low-Rank Products] \label{thm:compression_projection}
Consider a fixed matrix $\mtx{Z}_0 \in \CC^{d \times r}$ and an independent sequence
$\{ \mtx{Y}_1, \dots, \mtx{Y}_n \} \subset \M_d$ of random matrices.
Form the product $\mtx{Z}_n = \mtx{Y}_n \cdots \mtx{Y}_1 \mtx{Z}_0$.
Assume that
\[
\norm{ \E \mtx{Y}_i } \leq m_i
\quad\text{and}\quad
\sup_{\mtx{P} \in \mathcal{P}_r} \left( \E \norm{ (\mtx{Y}_i - \E \mtx{Y}_i) \mtx{P} }^2 \right)^{1/2} \leq \sigma_i m_i
\quad\text{for $i =1, \dots, n$,}
\]
where $\mathcal{P}_r \subset \M_d$ is the set of rank-$r$ orthogonal projectors.
Define $M = \prod_{i=1}^n m_i$ and $v = \sum_{i=1}^n \sigma_i^2$.  For each $p \geq 2$,
\begin{align}
\E \norm{ \mtx{Z}_n }
	&\leq \econst^{\cnst{C}_p v / 2} \cdot \norm{\mtx{Z}_0}_p \cdot M. 
	\label{eqt:expectation_projection_1} \\
\E \norm{ \mtx{Z}_n - \E \mtx{Z}_n }
	&\leq \big( \econst^{\cnst{C}_p v} - 1 \big)^{1/2} \cdot \norm{\mtx{Z}_0}_p \cdot M. 
	\label{eqt:expectation_projection_2} 
\end{align}
\end{theorem}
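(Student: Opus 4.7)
The plan is to follow the recursive template of Theorem~\ref{thm:pnorm} with $q = 2$ throughout. The only substantive change is how one bounds the fluctuation term $\triplenorm{(\mtx{Y}_i - \E \mtx{Y}_i) \mtx{Z}_{i-1}}_{p,2}$: instead of applying the operator ideal property directly to the spectral norm of $\mtx{Y}_i - \E \mtx{Y}_i$, one must first route through a rank-$r$ projector so as to take advantage of the weaker low-rank hypothesis on the factors.

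First I would observe that because $\mtx{Z}_0 \in \CC^{d \times r}$, each partial product $\mtx{Z}_i = \mtx{Y}_i \cdots \mtx{Y}_1 \mtx{Z}_0$ has column space of dimension at most $r$. Let $\mtx{P}_{i-1} \in \mathcal{P}_r$ be any rank-$r$ orthogonal projector whose range contains $\colspan(\mtx{Z}_{i-1})$; then $\mtx{P}_{i-1} \mtx{Z}_{i-1} = \mtx{Z}_{i-1}$, and by construction $\mtx{P}_{i-1}$ is a measurable function of $\mtx{Y}_1, \dots, \mtx{Y}_{i-1}$ and is therefore independent of $\mtx{Y}_i$. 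By the operator ideal property~\eqref{eqn:operator-ideal} applied pointwise,
\[
\norm{ (\mtx{Y}_i - \E \mtx{Y}_i) \mtx{Z}_{i-1} }_p^2 = \norm{ (\mtx{Y}_i - \E \mtx{Y}_i) \mtx{P}_{i-1} \mtx{Z}_{i-1} }_p^2 \leq \norm{ (\mtx{Y}_i - \E \mtx{Y}_i) \mtx{P}_{i-1} }^2 \cdot \norm{ \mtx{Z}_{i-1} }_p^2 .
\]
Conditioning on $(\mtx{Y}_1, \dots, \mtx{Y}_{i-1})$, pulling out $\norm{\mtx{Z}_{i-1}}_p^2$, using the independence of $\mtx{Y}_i$ from $\mtx{P}_{i-1}$, and invoking the supremum hypothesis over $\mathcal{P}_r$ yields the key fluctuation estimate
\[
\triplenorm{ (\mtx{Y}_i - \E \mtx{Y}_i) \mtx{Z}_{i-1} }_{p,2}^2 \leq \sigma_i^2 m_i^2 \cdot \triplenorm{ \mtx{Z}_{i-1} }_{p,2}^2 .
\]

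With this bound in hand, I would proceed exactly as in the proof of Theorem~\ref{thm:pnorm}. For~\eqref{eqt:expectation_projection_1}, apply Proposition~\ref{prop:Ricard_Xu} with $q = 2$ to the decomposition~\eqref{eqn:prod-decomp} (whose second summand is conditionally zero mean) to obtain the recursion
\[
\triplenorm{\mtx{Z}_i}_{p,2}^2 \leq m_i^2 \bigl( 1 + \cnst{C}_p \sigma_i^2 \bigr) \triplenorm{\mtx{Z}_{i-1}}_{p,2}^2 \leq m_i^2 \exp(\cnst{C}_p \sigma_i^2) \cdot \triplenorm{\mtx{Z}_{i-1}}_{p,2}^2,
\]
iterate, and invoke $\E \norm{\mtx{Z}_n} \leq \triplenorm{\mtx{Z}_n}_{p,2}$ from~\eqref{eqn:moment-method}. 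For~\eqref{eqt:expectation_projection_2}, apply the same proposition to the concentration decomposition~\eqref{eqn:conc-decomp}, substitute the growth recursion just established into the fluctuation term, iterate, and close the telescoping sum via the numerical identity of Lemma~\ref{lem:number_inequality} that was used at the end of the proof of~\eqref{eq:master_norm2}.

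The main obstacle is the rank-$r$ detour in the first step: one has to recognize that the hypothesis constrains $\mtx{Y}_i - \E \mtx{Y}_i$ only through its action on rank-$r$ subspaces, insert the random projector $\mtx{P}_{i-1}$ associated with the column span of $\mtx{Z}_{i-1}$, and carefully justify the tower/independence maneuver (in particular, that $\mtx{P}_{i-1}$ is measurable with respect to the past and hence independent of $\mtx{Y}_i$). Once that fluctuation bound is in place, the rest of the argument is a structurally identical repetition of Theorem~\ref{thm:pnorm}.
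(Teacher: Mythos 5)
Your proposal is correct and matches the paper's own argument: the paper likewise inserts a rank-$r$ projector $\mtx{P}_{i-1}$ (measurable with respect to $\mtx{Y}_1,\dots,\mtx{Y}_{i-1}$ and $\mtx{Z}_0$, hence independent of $\mtx{Y}_i$) to obtain the fluctuation bound $\triplenorm{(\mtx{Y}_i - \E\mtx{Y}_i)\mtx{Z}_{i-1}}_{p,2} \leq \sigma_i m_i \triplenorm{\mtx{Z}_{i-1}}_{p,2}$, and then repeats the recursion of Theorem~\ref{thm:pnorm} verbatim. No gaps beyond the same measurable-selection point the paper itself glosses over.
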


\begin{proof}
Define $\mtx{Z}_{i} = \mtx{Y}_i \mtx{Z}_{i-1}$ for each index $i$.  Since $\mtx{Z}_0 \in \CC^{d \times r}$,
the rank of each matrix $\mtx{Z}_i$ is at most $r$.  Thus, we can write $\mtx{Z}_i = \mtx{P}_i \mtx{Z}_i$,
where $\mtx{P}_i$ is a rank-$r$ orthogonal projector that only depends on $\mtx{Y}_{i}, \dots, \mtx{Y}_1$ and $\mtx{Z}_0$.
As a consequence,
\begin{align*}
\triplenorm{ (\mtx Y_i - \E \mtx Y_i) \mtx Z_{i-1} }_{p,2}
	&= \triplenorm{ (\mtx Y_i - \E \mtx Y_i) \mtx P_{i-1} \mtx Z_{i-1} }_{p,2} \\
	&\leq \big( \E \big[ \norm{  (\mtx Y_i - \E \mtx Y_i) \mtx P_{i-1} }^2 \cdot \norm{ \mtx{Z}_{i-1} }_p^2 \big] \big)^{1/2} \\
	& \leq \sup_{\mtx{P} \in \mathcal{P}_r} \big( \E \norm{ (\mtx Y_i - \E \mtx Y_i) \mtx{P} }^2 \big)^{1/2}
		\cdot \big( \E \norm{ \mtx Z_{i-1} }_p^2 \big)^{1/2}
	\leq \sigma_i m_i \cdot \triplenorm{ \mtx{Z}_{i-1} }_{p,2}.
\end{align*}
We have used the fact that $\mtx{Y}_i$ is independent from $\mtx{P}_{i-1}$ and from $\mtx{Z}_{i-1}$
to pass to the last line.

The rest of the proof runs along the same lines as the argument in Theorem~\ref{thm:pnorm},
using the last display in place of the bound~\eqref{eqn:fluctuation-bd}.
\end{proof}

Let us offer a simple example to illustrate why  Theorem~\ref{thm:compression_projection}
can produce better outcomes than Theorem~\ref{thm:pnorm}.
Consider a random matrix $\mtx{X} \in \M_d$ with the distribution
$\Prob{ \smash{\mtx{X} = \mathbf{e}_j \mathbf{e}_j{}^*} } = d^{-1}$ for each $j = 1, \dots ,d$.
As usual, $\mathbf{e}_j \in \CC^d$ is the $j$th standard basis vector.
Construct the random matrix $\mtx{Y} = \Id + \eps \mtx{X}$, where
$\eps$ is a Rademacher random variable that is independent from $\mtx{X}$.
Clearly, $\E \mtx{Y} = \Id$.  For any rank-$r$ orthogonal projector $\mtx{P}$,
\[
\E \norm{ (\mtx{Y} - \E \mtx{Y}) \mtx{P} }^2 = \E \norm{ \mtx{P} \mtx{X}^* \mtx{X} \mtx{P} }
	= \frac{1}{d} \sum_{i=1}^d \trace[ \mtx{P} \mathbf{e}_i \mathbf{e}_i^* \mtx{P} ]
	= \frac{1}{d} \trace \mtx{P}
	= \frac{r}{d}.
\]
Therefore,
\[
\sup_{\mtx{P} \in \mathcal{P}_r} \left( \E \norm{ (\mtx{Y} - \E \mtx{Y}) \mtx{P} }^2 \right)^{1/2}
	= \sqrt{r/d} \leq 1.
\]
By contrast, $\E \|\mtx Y - \E \mtx Y\|^2 = \E \|\mtx X\|^2 = 1$.
When $r \ll d$, this bound offers a significant improvement.
instead of the ambient dimension $d$.

\subsection{Adapted Sequences}
\label{sec:adapted}

We can easily generalize our results on a product of independent random matrices
to a product of adapted random matrices.  This kind of extension is valuable for studying
iterative algorithms where the choices made by the algorithm at a given step
depend on the history of the iteration.  

Let $(\Omega, \mF, \mathbbm{P})$ be a probability space,
and let $\mF_1 \subset \mF_2 \subset \cdots \subset \mF_n \subset \mF$ be a filtration.
For each index $i = 1, \dots, n$, we write $\E_i$ for the expectation conditioned
on the $\sigma$-algebra $\mF_i$.  The operator $\E_0 := \E$ is the unconditional expectation.

We consider an adapted sequence $\{ \mtx{Y}_1, \dots, \mtx{Y}_n \} \subset \M_d$ of random
matrices; that is, each $\mtx{Y}_i$ is measurable with respect to $\mF_i$.
The next result provides information about the growth and concentration
properties of the product $\mtx{Z}_n = \mtx{Y}_n \cdots \mtx{Y}_1$.
Note that the natural concentration result compares $\mtx{Z}_n$ with a product
of conditional expectations, rather than the expectation of the product.

\begin{theorem}[Products of Adapted Random Matrices] \label{thm:pnorm_adapted}
Consider a fixed matrix $\mtx{Z}_0 \in \M_d$ and an adapted sequence $\{ \mtx{Y}_1, \dots, \mtx{Y}_n \} \subset \M_d$
of random matrices.  Form the products
\[
\mtx{Z}_n = \mtx{Y}_n \cdots \mtx{Y}_1 \mtx{Z}_0
\quad\text{and}\quad
\mtx{F}_n = (\E_{n-1} \mtx{Y}_n) \cdots (\E_1 \mtx{Y}_2) (\E_{0} \mtx{Y}_1) \mtx{Z}_0.
\]
Assume that
\[
\norm{ \E_{i-1} \mtx{Y}_i } \leq m_i
\quad\text{and}\quad
\norm{ \mtx{Y}_i - \E_{i-1} \mtx{Y}_i } \leq \sigma_i m_i
\quad\text{almost surely for $i = 1, \dots, n$.}
\]
Define $M = \prod_{i=1}^n m_i$ and $v = \sum_{i=1}^n \sigma_i^2$.
For $2 \leq q\leq p$, the random product $\mtx{Z}_n$ satisfies the growth
and concentration bounds
\begin{align}
\triplenorm{ \mtx{Z}_n }_{p,q}
	&\leq \econst^{\cnst{C}_p v / 2} \norm{ \mtx{Z}_0 }_p \cdot M;
	\label{eqt:expectation_adapted_1} \\
\triplenorm{ \mtx{Z}_n - \mtx{F}_n }_{p,q}
	&\leq \big( \econst^{\cnst{C}_p v} - 1 \big)^{1/2} \norm{\mtx{Z}_0}_p \cdot M.
	\label{eqt:expectation_adapted_2}
\end{align}
\end{theorem}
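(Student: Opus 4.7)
The approach closely parallels the proof of Theorem~\ref{thm:pnorm}, with the filtration $\{\mF_i\}$ playing the role that independence previously played. The key structural observation is that both $\mtx{Z}_{i-1}$ and $\mtx{F}_{i-1}$ are $\mF_{i-1}$-measurable: the former because $\{\mtx{Y}_j\}$ is adapted, and the latter because each factor $\E_{j-1}\mtx{Y}_j$ is $\mF_{j-1}\subset\mF_{i-1}$-measurable for every $j\leq i$. This lets me use the conditional expectation $\E_{i-1}$ as a drop-in replacement for the marginal expectation $\E$ in the decompositions~\eqref{eqn:prod-decomp} and~\eqref{eqn:conc-decomp}.

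For the growth bound~\eqref{eqt:expectation_adapted_1}, after reducing to $m_i=1$ by homogeneity, I would split
$$
\mtx{Z}_i = (\E_{i-1}\mtx{Y}_i)\mtx{Z}_{i-1} + (\mtx{Y}_i - \E_{i-1}\mtx{Y}_i)\mtx{Z}_{i-1}.
$$
Because $\mtx{Z}_{i-1}$ is $\mF_{i-1}$-measurable, the fluctuation term $\mtx{N}_i := (\mtx{Y}_i-\E_{i-1}\mtx{Y}_i)\mtx{Z}_{i-1}$ satisfies $\E_{i-1}\mtx{N}_i = \mtx{0}$. Since the other summand $(\E_{i-1}\mtx{Y}_i)\mtx{Z}_{i-1}$ is also $\mF_{i-1}$-measurable, the tower property yields $\E[\mtx{N}_i \mid (\E_{i-1}\mtx{Y}_i)\mtx{Z}_{i-1}] = \mtx{0}$, so Proposition~\ref{prop:Ricard_Xu} applies. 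Independence is no longer available to factor expectations, so I would estimate $\triplenorm{\mtx{N}_i}_{p,q}$ using the almost sure hypothesis: the operator ideal property gives $\norm{\mtx{N}_i}_p \leq \sigma_i m_i \norm{\mtx{Z}_{i-1}}_p$ pathwise, hence $\triplenorm{\mtx{N}_i}_{p,q} \leq \sigma_i m_i \triplenorm{\mtx{Z}_{i-1}}_{p,q}$. From here, the recursion is verbatim that of Theorem~\ref{thm:pnorm}\eqref{eq:master_norm1}.

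For the concentration bound~\eqref{eqt:expectation_adapted_2}, I would exploit the parallel decomposition
$$
\mtx{Z}_i - \mtx{F}_i = (\E_{i-1}\mtx{Y}_i)(\mtx{Z}_{i-1} - \mtx{F}_{i-1}) + \mtx{N}_i.
$$
The first summand is $\mF_{i-1}$-measurable and the second is the same conditionally mean-zero fluctuation as before, so Proposition~\ref{prop:Ricard_Xu} again applies. Bounding the first term by $m_i \triplenorm{\mtx{Z}_{i-1}-\mtx{F}_{i-1}}_{p,q}$ via the operator ideal property (using $\norm{\E_{i-1}\mtx{Y}_i}\leq m_i$ a.s.), bounding $\triplenorm{\mtx{N}_i}_{p,q}$ by $\sigma_i m_i \triplenorm{\mtx{Z}_{i-1}}_{p,q}$, and controlling the latter through the growth estimate just established, reproduces exactly the recursion appearing in the proof of~\eqref{eq:master_norm2}. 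The numerical inequality of Lemma~\ref{lem:number_inequality} then closes the argument.

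The main obstacle is not a calculation but a bookkeeping point: verifying that Proposition~\ref{prop:Ricard_Xu}, whose hypothesis is phrased for a pair of random matrices with $\E[\mtx{Y}\mid\mtx{X}] = \mtx{0}$, applies in the filtration setting where the zero-mean statement is relative to $\mF_{i-1}$ rather than to a single conditioning variable. The tower property furnishes the reduction cleanly, as sketched above. A secondary point explains why the theorem is stated with almost sure rather than moment hypotheses on the fluctuations: without independence one cannot factor $\E\norm{\mtx{N}_i}_p^q$ into a product of moments, so the pathwise bound is essential for decoupling the contribution of $\mtx{Y}_i$ from that of $\mtx{Z}_{i-1}$.
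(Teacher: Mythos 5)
Your proposal is correct and follows essentially the same route as the paper: the same two decompositions with $\E_{i-1}$ replacing $\E$, the same application of Proposition~\ref{prop:Ricard_Xu} to the conditionally centered fluctuation, pathwise use of the almost-sure hypotheses in place of factoring expectations by independence, and the same recursions closed by Lemma~\ref{lem:number_inequality}. Your tower-property argument showing $\E[\mtx{N}_i \mid (\E_{i-1}\mtx{Y}_i)\mtx{Z}_{i-1}] = \mtx{0}$ is a clean justification of the step the paper dismisses as ``the obvious variant'' of Proposition~\ref{prop:Ricard_Xu}.
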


\begin{proof}
Recursively construct the products
\[
\mtx{Z}_i = \mtx{Y}_i \mtx{Z}_{i-1}
\quad\text{and}\quad
\mtx{F}_i = (\E_{i-1} \mtx{Y}_i) \mtx{Z}_{i-1}
\quad\text{for $i = 1, \dots, n$.}
\]
To bound the growth of $\mtx{Z}_i$ and the concentration of $\mtx{Z}_i - \mtx{F}_i$,
we simply need to update the argument from Theorem~\ref{thm:pnorm}.

To obtain~\eqref{eqt:expectation_adapted_1}, decompose
\[
\mtx{Z}_i = (\E_{i-1} \mtx{Y}_i)\mtx{Z}_{i-1} + (\mtx{Y}_i - \E_{i-1} \mtx{Y}_i) \mtx{Z}_{i-1}.
\]
Since $\E_{i-1} \mtx{Y}_i$ and $\mtx{Z}_{i-1}$ are both measurable with respect to $\mF_{i-1}$
and $\E_{i-1}(\mtx{Y}_i - \E_{i-1} \mtx{Y}_i) = \mtx{0}$, the obvious variant of
Proposition~\ref{prop:Ricard_Xu} implies that
\[
\begin{aligned}
\triplenorm{ \mtx{Z}_i }_{p,q}^2
	&\leq \triplenorm{ (\E_{i-1} \mtx{Y}_i) \mtx{Z}_{i-1} }_{p,q}^2 + \cnst{C}_p \triplenorm{ (\mtx{Y}_i - \E_{i-1} \mtx{Y}_i) \mtx{Z}_{i-1} }_{p,q}^2 \\
	&\leq m_i^2 \triplenorm{ \mtx{Z}_{i-1} }_{p,q} + \cnst{C}_p m_i^2 \sigma_i^2 \triplenorm{ \mtx{Z}_{i-1} }_{p,q}^2.
\end{aligned}
\]
The second inequality follows from~\eqref{eqn:operator-ideal}.
This is the same recurrence we obtain in the proof of Theorem~\ref{thm:pnorm}, relation~\eqref{eq:master_norm1}.
The rest of the argument is the same.

To obtain~\eqref{eqt:expectation_adapted_2}, decompose
\[
\mtx{Z}_i - \mtx{F}_i = \mtx{Y}_i \mtx{Z}_{i-1} - (\E_{i-1} \mtx{Y}_i) \mtx{F}_{i-1}
	= (\E_{i-1} \mtx{Y}_i)(\mtx{Z}_{i-1} - \mtx{F}_{i-1}) + (\mtx{Y}_i - \E_{i-1}\mtx{Y}_i) \mtx{Z}_{i-1}.
\]
As before, Proposition~\ref{prop:Ricard_Xu} implies that
\[
\begin{aligned}
\triplenorm{ \mtx{Z}_i - \mtx{F}_i }_{p,q}^2
	&\leq \triplenorm{ (\E_{i-1} \mtx{Y}_i) (\mtx{Z}_{i-1} - \mtx{F}_{i-1}) }_{p,q}^2 + \cnst{C}_p \triplenorm{ (\mtx{Y}_i - \E_{i-1} \mtx{Y}_i) \mtx{Z}_{i-1} }_{p,q}^2 \\
	&\leq m_i^2 \triplenorm{ \mtx{Z}_{i-1} - \mtx{F}_{i-1} }_{p,q} + \cnst{C}_p m_i^2 \sigma_i^2 \triplenorm{ \mtx{Z}_{i-1} }_{p,q}^2.
\end{aligned}
\]
This is the same recurrence that arose when we established Theorem~\ref{thm:pnorm}, relation \eqref{eq:master_norm2}.
The balance of the argument is identical.
\end{proof}

\subsection{The Spectral Radius}
\label{sec:spectral-radius}

Products of matrices are closely related to the evolution of discrete-time linear
dynamical systems.  In this context, it may be more natural to study the
\emph{spectral radius} of the matrix product, rather than its spectral norm.
Bounds for the spectral radius follow as corollary of our work,
owing to the following classical fact.

\begin{fact}[Schur] \label{fact:schur}
Let $\mtx{M} \in \M_d$ be a square matrix.  The spectral radius $\varrho(\mtx{M})$
is defined as the maximum absolute value of an eigenvalue of $\mtx{M}$.
It satisfies the variational principle
$$
\varrho(\mtx{M}) = \inf_{\mtx{S} \in \M_d} \norm{ \mtx{S}^{-1} \mtx{M} \mtx{S} }.
$$
The infimum takes place over all invertible matrices $\mtx{S}$.  In particular $\varrho(\mtx{M}) \leq \norm{\mtx{M}}$.
\end{fact}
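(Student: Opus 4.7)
The plan is to establish the identity by matching upper and lower bounds on $\inf_{\mtx{S}} \norm{ \mtx{S}^{-1} \mtx{M} \mtx{S} }$. The lower bound is essentially trivial. For any invertible $\mtx{S}$, the matrix $\mtx{S}^{-1} \mtx{M} \mtx{S}$ is similar to $\mtx{M}$, so they share eigenvalues and hence have the same spectral radius. Since the spectral norm dominates the spectral radius in general (if $\mtx{M} \vct{v} = \lambda \vct{v}$ with $\abs{\lambda} = \varrho(\mtx{M})$ and $\vct{v} \neq \vct{0}$, then $\norm{\mtx{M}} \geq \norm{\mtx{M} \vct{v}} / \norm{\vct{v}} = \abs{\lambda}$), we conclude $\varrho(\mtx{M}) = \varrho(\mtx{S}^{-1}\mtx{M}\mtx{S}) \leq \norm{\mtx{S}^{-1}\mtx{M}\mtx{S}}$. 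Taking the infimum over $\mtx{S}$ gives $\varrho(\mtx{M}) \leq \inf_{\mtx{S}} \norm{\mtx{S}^{-1}\mtx{M}\mtx{S}}$ and, in particular, $\varrho(\mtx{M}) \leq \norm{\mtx{M}}$ (take $\mtx{S} = \Id$).

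For the reverse inequality, I would use the Schur triangularization theorem: there exists a unitary $\mtx{U} \in \M_d$ such that $\mtx{T} \defeq \mtx{U}^* \mtx{M} \mtx{U}$ is upper triangular, with the eigenvalues $\lambda_1, \dots, \lambda_d$ of $\mtx{M}$ on its diagonal. The spectral norm of $\mtx{T}$ may exceed $\varrho(\mtx{M})$ because of the strictly upper triangular part, so the task is to conjugate that part away. Introduce the diagonal scaling $\mtx{D}_\delta \defeq \operatorname{diag}(1,\delta,\delta^2,\dots,\delta^{d-1})$ for $\delta > 0$. A direct computation shows $(\mtx{D}_\delta^{-1} \mtx{T} \mtx{D}_\delta)_{ij} = \delta^{j-i} T_{ij}$, which preserves the diagonal entries $\lambda_i$ but multiplies each strictly upper triangular entry by a positive power of $\delta$.

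Now I would set $\mtx{S} \defeq \mtx{U} \mtx{D}_\delta$, so that $\mtx{S}^{-1} \mtx{M} \mtx{S} = \mtx{D}_\delta^{-1} \mtx{T} \mtx{D}_\delta$. Decomposing $\mtx{T} = \mtx{\Lambda} + \mtx{N}$ into its diagonal part $\mtx{\Lambda}$ and strictly upper triangular part $\mtx{N}$, the triangle inequality yields
$$
\norm{ \mtx{D}_\delta^{-1} \mtx{T} \mtx{D}_\delta }
\leq \norm{\mtx{\Lambda}} + \norm{ \mtx{D}_\delta^{-1} \mtx{N} \mtx{D}_\delta }
= \varrho(\mtx{M}) + O(\delta),
$$
since every entry of $\mtx{D}_\delta^{-1} \mtx{N} \mtx{D}_\delta$ carries a factor of at least $\delta$. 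Letting $\delta \downarrow 0$ demonstrates that $\inf_{\mtx{S}} \norm{\mtx{S}^{-1} \mtx{M} \mtx{S}} \leq \varrho(\mtx{M})$, completing the proof.

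The only step that requires any care is the Schur triangularization, which I would invoke as a black box; the rest is the classical diagonal-scaling trick. The infimum is generally not attained when $\mtx{M}$ has a nontrivial Jordan block at a peripheral eigenvalue, so I would be careful to write \textbf{inf} rather than \textbf{min} throughout.
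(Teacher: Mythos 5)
Your proof is correct: the lower bound follows from similarity invariance of the spectrum together with the elementary estimate $\varrho(\mtx{M}) \leq \norm{\mtx{M}}$, and the upper bound is the standard Schur triangularization plus diagonal-scaling argument, including the correct remark that the infimum need not be attained (Jordan blocks). The paper states Fact~\ref{fact:schur} as a classical result and offers no proof of its own, so there is nothing internal to compare against; your route is the canonical textbook one and fully adequate.
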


Let us give an indication of the kinds of results that are possible.

\begin{corollary}[Expectation Bounds for the Spectral Radius] \label{cor:expectation-srad}
Consider an independent sequence $\{ \mtx{Y}_1, \dots, \mtx{Y}_n \} \subset \M_d$
of random matrices, and form the product $\mtx{Z}_n = \mtx{Y}_n \cdots \mtx{Y}_1$.
Let $\mtx{S} \in \M_d$ be a fixed invertible matrix, and assume that
\[
\norm{ \mtx{S}^{-1} (\Expect \mtx{Y}_i) \mtx{S} } \leq m_i
\quad\text{and}\quad
\Big( \Expect \norm{ \mtx{S}^{-1}( \mtx{Y}_i - \Expect \mtx{Y}_i) \mtx{S} }^2 \Big)^{1/2} \leq \sigma_i m_i
\quad\text{for $i = 1, \dots, n$.}
\]
Let $M = \prod_{i=1}^n m_i$ and $v = \sum_{i=1}^n \sigma_i^2$.  Then
\begin{align*}
\E \varrho( \mtx{Z}_n )
	&\leq \exp\left( \sqrt{2 v \, (2 v \vee \log d)} \right) \cdot M. 
\end{align*}
\end{corollary}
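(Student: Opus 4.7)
The plan is to reduce Corollary~\ref{cor:expectation-srad} directly to the spectral-norm bound~\eqref{eqt:expectation_1} from Corollary~\ref{cor:expectation} by conjugating the entire product by the fixed invertible matrix $\mtx{S}$.

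First I would invoke Schur's variational principle (Fact~\ref{fact:schur}) to get the pointwise bound
\[
\varrho(\mtx{Z}_n) \leq \norm{ \mtx{S}^{-1} \mtx{Z}_n \mtx{S} }.
\]
The crucial algebraic observation is that conjugation is multiplicative, so
\[
\mtx{S}^{-1} \mtx{Z}_n \mtx{S} = \big( \mtx{S}^{-1} \mtx{Y}_n \mtx{S} \big) \cdots \big( \mtx{S}^{-1} \mtx{Y}_1 \mtx{S} \big) = \widetilde{\mtx{Y}}_n \cdots \widetilde{\mtx{Y}}_1,
\]
where $\widetilde{\mtx{Y}}_i := \mtx{S}^{-1} \mtx{Y}_i \mtx{S}$. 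The family $\{ \widetilde{\mtx{Y}}_1, \dots, \widetilde{\mtx{Y}}_n \}$ inherits independence from $\{ \mtx{Y}_1, \dots, \mtx{Y}_n \}$, and since $\E$ commutes with the linear conjugation operation, $\E \widetilde{\mtx{Y}}_i = \mtx{S}^{-1} (\E \mtx{Y}_i) \mtx{S}$ and $\widetilde{\mtx{Y}}_i - \E \widetilde{\mtx{Y}}_i = \mtx{S}^{-1}(\mtx{Y}_i - \E \mtx{Y}_i) \mtx{S}$. Therefore the hypotheses of Corollary~\ref{cor:expectation-srad} translate verbatim into the hypotheses of Corollary~\ref{cor:expectation} applied to the conjugated sequence, with the same parameters $m_i, \sigma_i, M, v$.

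Next I would apply Corollary~\ref{cor:expectation}, relation~\eqref{eqt:expectation_1}, to the product $\widetilde{\mtx{Z}}_n := \widetilde{\mtx{Y}}_n \cdots \widetilde{\mtx{Y}}_1$ to obtain
\[
\E \norm{ \widetilde{\mtx{Z}}_n } \leq \exp\left( \sqrt{2 v \, (2 v \vee \log d)} \right) \cdot M.
\]
Combining with the pointwise inequality from Schur's fact and taking expectations yields
\[
\E \varrho(\mtx{Z}_n) \leq \E \norm{ \mtx{S}^{-1} \mtx{Z}_n \mtx{S} } = \E \norm{ \widetilde{\mtx{Z}}_n } \leq \exp\left( \sqrt{2 v \, (2 v \vee \log d)} \right) \cdot M,
\]
which is the advertised bound.

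There is no real obstacle here, as the argument is a one-line reduction; the content lies in having set up Corollary~\ref{cor:expectation} and Schur's variational principle in the right form. The only subtlety worth highlighting is that the hypotheses of the corollary are stated in terms of the conjugated quantities $\mtx{S}^{-1}(\cdot) \mtx{S}$, which is precisely what is needed so that the dimensional factor $d$ and the structural constants in~\eqref{eqt:expectation_1} remain unchanged. In principle one could also derive analogues of~\eqref{eqt:expectation_2}, the tail bounds in Corollary~\ref{cor:concentration}, or the results of Section~\ref{sec:perturbations} for the spectral radius by the same conjugation trick, but that is not required for the stated corollary.
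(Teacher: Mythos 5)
Your proposal is correct and is exactly the paper's argument: the paper proves this corollary by combining Fact~\ref{fact:schur} with Corollary~\ref{cor:expectation}, which is precisely your conjugation-by-$\mtx{S}$ reduction spelled out in detail. No gaps; the verification that the conjugated factors $\mtx{S}^{-1}\mtx{Y}_i\mtx{S}$ inherit independence and satisfy the hypotheses of Corollary~\ref{cor:expectation} with the same $m_i$, $\sigma_i$ is all that is needed.
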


\begin{proof}
Combine Corollary~\ref{cor:expectation} and Fact~\ref{fact:schur}.
\end{proof}

\subsection{Prospects}

We have developed a collection of nonasymptotic bounds for products
of random matrices.  These results hold under simple and easily verifiable
conditions, and they give accurate predictions about the behavior of
some particular instances (e.g., products of iid random perturbations of the identity).
The proofs are based on foundational results about the geometry
of the Schatten classes, and they can easily be adapted to treat variants
of the problems under consideration.

A disappointing feature of our results is that they do not account for interactions between the matrix factors.
For example, when $\mtx Y_i = \Id + \mtx X_i/n$ for bounded, independent matrix perturbations $\mtx X_i$, we have shown that
\begin{equation*}
\log \E \|\mtx Y_n \cdots \mtx Y_1\| \leq \frac 1n \sum_{i=1}^n \| \E \mtx X_i\| + O\left(\sqrt{\frac{\log d}{n}}\right)\,.
\end{equation*}
However, when the matrices $\mtx X_i$ commute almost surely, it is easy to show the sharper bound
\begin{equation*}
\log \E \|\mtx Y_n \cdots \mtx Y_1\| \leq \frac 1n \left\|\sum_{i=1}^n  \E \mtx X_i\right\| + O\left(\sqrt{\frac{\log d}{n}}\right)\,.
\end{equation*}
The results of Emme and Hubert~\cite{emme2017limit} establish that $\lim_{n \to \infty} \log \E \|\mtx Y_n \cdots \mtx Y_1\| = \lim_{n \to \infty} \left\|\sum_{i=1}^n  \E \mtx X_i\right\|/n$.
It therefore seems reasonable to conjecture that a refined bound of the latter type exists in more generality.
The growth bounds discussed in Remark 5.2 imply a statement of the form
\begin{equation*}
\log \E \|\mtx Y_n \cdots \mtx Y_1\| \leq \log  \frac 1n \left\|\prod_{i=1}^n \E \mtx X_i \right\| + \mathrm{error}\,,
\end{equation*}
but the error term is not sharp.
This type of bound would echo Tropp's improvements~\cite{Tro12}
to the Ahlswede--Winter results~\cite{AW02:Strong-Converse} for a sum
of independent random matrices.  At present, it is not clear whether
this refinement is possible, nor what technical arguments would lead there.
\appendix

\section{Supplementary Proofs}

This appendix collects a few additional arguments.
First, we establish the sharp form of the result
on subquadratic averages, Proposition~\ref{prop:Ricard_Xu},
using an elementary method.

\begin{lemma}[Sharp Subquadratic Averages]
\label{lem:subquadratic-sharp}
Let $\mtx{X}, \mtx{Y}$ be random matrices of the same size that satisfy $\Expect[ \mtx{Y} | \mtx{X} ] = \mtx{0}$.
When $2 \leq q \leq p$,
\[
\triplenorm{ \mtx{X} + \mtx{Y} }_{p,q}^2 \leq \triplenorm{ \mtx{X} }_{p,q}^2 + \cnst{C}_p \triplenorm{ \mtx{Y} }_{p,q}^2,
\]
where the optimal constant $\cnst{C}_p := p - 1$.
\end{lemma}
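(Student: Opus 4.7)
The plan is to bootstrap Proposition~\ref{prop:Ricard_Xu}, which carries the suboptimal constant $2\cnst{C}_p$, up to the sharp constant $\cnst{C}_p = p-1$ via an iterative self-improvement. Let $T$ denote any constant for which the inequality
\[
\triplenorm{\mtx{X}+\mtx{Y}}_{p,q}^2 \leq \triplenorm{\mtx{X}}_{p,q}^2 + T\triplenorm{\mtx{Y}}_{p,q}^2
\]
is already known to hold whenever $\E[\mtx{Y}|\mtx{X}] = \mtx{0}$. The goal is to show that the same inequality then holds with the improved constant $(T+\cnst{C}_p)/2$, so that iterating the improvement drives $T$ to the fixed point $\cnst{C}_p$ of this contraction.

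The key step will be to apply Corollary~\ref{cor:2smooth} not to the pair $(\mtx{X},\mtx{Y})$ verbatim, but to the shifted pair $\mtx{A} = \mtx{X}+\tfrac{1}{2}\mtx{Y}$ and $\mtx{B} = \tfrac{1}{2}\mtx{Y}$, for which $\mtx{A}+\mtx{B} = \mtx{X}+\mtx{Y}$ and $\mtx{A}-\mtx{B} = \mtx{X}$. The corollary then reads
\[
\left[\tfrac{1}{2}\bigl(\triplenorm{\mtx{X}+\mtx{Y}}_{p,q}^q + \triplenorm{\mtx{X}}_{p,q}^q\bigr)\right]^{2/q}
  \leq \triplenorm{\mtx{X}+\tfrac{1}{2}\mtx{Y}}_{p,q}^2 + \tfrac{\cnst{C}_p}{4}\triplenorm{\mtx{Y}}_{p,q}^2.
\]
Because $\E[\tfrac{1}{2}\mtx{Y}|\mtx{X}] = \mtx{0}$, I will invoke the bootstrap hypothesis on the pair $(\mtx{X},\tfrac{1}{2}\mtx{Y})$ to bound $\triplenorm{\mtx{X}+\tfrac{1}{2}\mtx{Y}}_{p,q}^2$ by $\triplenorm{\mtx{X}}_{p,q}^2 + (T/4)\triplenorm{\mtx{Y}}_{p,q}^2$. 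The power-mean inequality $[\tfrac{1}{2}(a^q+b^q)]^{2/q} \geq \tfrac{1}{2}(a^2+b^2)$, valid for $q \geq 2$, then lets me replace the left-hand side by $\tfrac{1}{2}(\triplenorm{\mtx{X}+\mtx{Y}}_{p,q}^2 + \triplenorm{\mtx{X}}_{p,q}^2)$, and cancelling the common $\triplenorm{\mtx{X}}_{p,q}^2$ terms produces the desired improvement.

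Starting from $T_0 = 2\cnst{C}_p$ (from Proposition~\ref{prop:Ricard_Xu}), the iterates $T_n = \cnst{C}_p(1+2^{-n})$ each furnish valid versions of the lemma; passing $n\to\infty$ in the inequality yields the claim with the sharp constant $\cnst{C}_p$. The matching lower bound on the constant can be obtained by specializing to $q=p$, a deterministic $\mtx{X}$, and $\mtx{Y} = \varepsilon\mtx{B}$ for a Rademacher $\varepsilon$ and a matrix $\mtx{B}$ extremizing Fact~\ref{fact:bcl}; this reduces the statement to the two-point inequality, where the Ball--Carlen--Lieb constant $\cnst{C}_p=p-1$ is already known to be sharp.

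The hard part will be locating the correct reparametrization that turns a single application of Corollary~\ref{cor:2smooth} into a genuine contraction. Feeding the corollary the pair $(\mtx{X},\mtx{Y})$ directly only recovers the $2\cnst{C}_p$ estimate from the main text, and the most natural alternative---symmetrization by a conditionally i.i.d.\ copy $\mtx{Y}'$---loses a further factor of $4$ via the crude bound $\triplenorm{\mtx{Y}-\mtx{Y}'}_{p,q} \leq 2\triplenorm{\mtx{Y}}_{p,q}$. Splitting $\mtx{Y}$ as $\tfrac{1}{2}\mtx{Y}+\tfrac{1}{2}\mtx{Y}$ and absorbing one half into the base matrix---so that the recursion can re-enter itself---is the unusual step that makes each application advance halfway to the fixed point.
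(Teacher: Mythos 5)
Your argument is correct, and it reaches the sharp constant by a different limiting mechanism than the paper's. Both proofs rest on the same two ingredients---the weak bound \eqref{eqn:subquadratic-weak} and Corollary~\ref{cor:2smooth} applied at a point between $\mtx{X}$ and $\mtx{X}+\mtx{Y}$ so that the ``difference'' direction recovers $\mtx{X}$---but the paper sets $\mtx{Z}=\mtx{Y}/n$, proves the monotonicity $D_k \le D_{k-1}$ of second differences along the $n$-step path, telescopes, and sends $n\to\infty$, incurring the explicit slack $\cnst{C}_p\,(n+1)/n$; you instead apply the smoothness inequality once at the midpoint $\mtx{X}+\tfrac12\mtx{Y}$ and feed the half-size pair $(\mtx{X},\tfrac12\mtx{Y})$ (which still satisfies the conditional-mean-zero hypothesis) back into the inductive hypothesis, so that one application improves any admissible constant $T$ to $(T+\cnst{C}_p)/2$, and iterating from $T_0=2\cnst{C}_p$ gives $T_n=\cnst{C}_p(1+2^{-n})\to\cnst{C}_p$. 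The computation in your key step checks out: the power-mean bound $[\tfrac12(a^q+b^q)]^{2/q}\ge\tfrac12(a^2+b^2)$ for $q\ge 2$ is exactly the Lyapunov step the paper uses, and the map $T\mapsto (T+\cnst{C}_p)/2$ does contract to the fixed point $\cnst{C}_p$. What each route buys: your bootstrap is structurally cleaner (one inequality per stage, no bookkeeping of the $D_k$, the dyadic recursion makes the self-improvement transparent), while the paper's single-pass telescope works with one fixed pair of random matrices and exhibits an explicit finite-$n$ error. Two small housekeeping points, neither a gap: before cancelling $\triplenorm{\mtx{X}}_{p,q}^2$ you should dispose of the trivial case where $\triplenorm{\mtx{X}}_{p,q}$ or $\triplenorm{\mtx{Y}}_{p,q}$ is infinite; and in your optimality sketch an exact extremizer of Fact~\ref{fact:bcl} need not exist, so argue with near-extremizers (the paper itself does not reprove optimality, inheriting it from Ball--Carlen--Lieb and Ricard--Xu).
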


\begin{proof}
Fix a natural number $n$, and set $\mtx{Z} = n^{-1} \mtx{Y}$.  Inequality~\eqref{eqn:subquadratic-weak} states that
\[
D_1 := \triplenorm{ \mtx{X} + \mtx{Z} }_{p,q}^2 - \triplenorm{ \mtx{X} }_{p,q}^2 - 2 \cnst{C}_p \triplenorm{\mtx{Z}}_{p,q}^2 \leq 0.
\]
For a parameter $2 \leq k \leq n$, Corollary~\ref{cor:2smooth} and Lyapunov's inequality imply that
\[
\triplenorm{ \mtx{X} + k \mtx{Z} }_{p,q}^2 + \triplenorm{ \mtx{X} + (k-2) \mtx{Z} }_{p,q}^2
	\leq 2 \triplenorm{ \mtx{X} + (k-1) \mtx{Z} }_{p,q}^2 + 2 \cnst{C}_p \triplenorm{ \mtx{Z} }_{p,q}^2.
\]
Rearranging the last display, we see that
\[
\begin{aligned}
D_k &:= \triplenorm{ \mtx{X} + k \mtx{Z} }_{p,q}^2 - \triplenorm{ \mtx{X} + (k-1)\mtx{Z} }_{p,q}^2 - 2 \cnst{C}_p k \triplenorm{\mtx{Z}}_{p,q}^2 \\
	&\phantom{:}\leq \triplenorm{ \mtx{X} + (k-1) \mtx{Z} }_{p,q}^2 - \triplenorm{ \mtx{X} + (k-2)\mtx{Z} }_{p,q}^2 - 2 \cnst{C}_p (k-1) \triplenorm{\mtx{Z}}_{p,q}^2
	= D_{k-1}.
\end{aligned}
\]
In particular, $D_k \leq D_1 \leq 0$.  Using a telescoping sum,
\begin{align*}
\triplenorm{ \mtx{X} + \mtx{Y} }_{p,q}^2 - \triplenorm{ \mtx{X} }_{p,q}^2
	&= \sum_{k=1}^n \left( \triplenorm{ \mtx{X} + k \mtx{Z} }_{p,q}^2 - \triplenorm{ \mtx{X} + (k-1) \mtx{Z} }_{p,q}^2 \right) \\
	&= \sum_{k=1}^n \left( D_k + 2\cnst{C}_p k \triplenorm{\mtx{Z}}_{p,q}^2 \right)
	\leq \sum_{k=1}^n 2 \cnst{C}_p k \triplenorm{\mtx{Z}}_{p,q}^2
	= \cnst{C}_p \frac{n+1}{n} \triplenorm{ \mtx{Y} }_{p,q}^2.
\end{align*}
Take the limit as $n \to \infty$ to arrive at the stated result.
\end{proof}

Second, we present a basic numerical inequality for weighted
sums of exponentials.

\begin{lemma}\label{lem:number_inequality}
Let $a_1,a_2,\dots,a_n$ be a sequence of real numbers. Then
\begin{equation}
\sum_{i=1}^na_i \exp\left( \sum_{k=1}^{i-1}a_k \right)\leq \exp\left( \sum_{i=1}^na_i \right) - 1.
\end{equation}
\end{lemma}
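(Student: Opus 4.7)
My plan is to reduce the inequality to the elementary numerical fact $1 + t \leq \econst^t$ via a telescoping argument. Introduce the partial sums $S_i = \sum_{k=1}^{i} a_k$ with the convention $S_0 = 0$, so that $a_i = S_i - S_{i-1}$ and $\sum_{i=1}^n a_i = S_n$. The goal is then to establish
\[
\sum_{i=1}^n (S_i - S_{i-1}) \econst^{S_{i-1}} \leq \econst^{S_n} - 1.
\]

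Next, I would write the right-hand side as a telescoping sum,
\[
\econst^{S_n} - 1 = \econst^{S_n} - \econst^{S_0} = \sum_{i=1}^n \left( \econst^{S_i} - \econst^{S_{i-1}} \right),
\]
which reduces the problem to the termwise inequality
\[
(S_i - S_{i-1}) \econst^{S_{i-1}} \leq \econst^{S_i} - \econst^{S_{i-1}}, \qquad i = 1, \dots, n.
\]

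Finally, setting $t = S_i - S_{i-1}$ and factoring out the positive quantity $\econst^{S_{i-1}}$, the termwise inequality becomes $t \leq \econst^{t} - 1$, i.e., the classical bound $1 + t \leq \econst^{t}$ valid for every $t \in \R$. Summing over $i$ yields the result. There is no real obstacle here; the only mild subtlety is remembering that the $a_i$ are allowed to be negative, but the bound $1 + t \leq \econst^t$ holds for all real $t$, so the termwise step goes through without sign restrictions.
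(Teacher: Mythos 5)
Your proof is correct and follows essentially the same route as the paper: both arguments reduce the claim to the termwise bound $a_i \econst^{S_{i-1}} \leq \econst^{S_i} - \econst^{S_{i-1}}$ (i.e., $t \leq \econst^t - 1$ for all real $t$) and then telescope the sum.
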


\begin{proof}
The elementary inequality $a \leq \econst^a - 1$, valid for $a \in \R$, implies that
\begin{equation*}
a_i \exp\left( \sum_{k=1}^{i-1} a_k \right) \leq \exp\left( \sum_{k=1}^{i} a_k \right)  - \exp\left( \sum_{k=1}^{i-1} a_k \right).
\end{equation*}
Sum the displayed equation over $i = 1, \dots, n$ to verify the claim.
\end{proof}

\bibliographystyle{habbrv}
\bibliography{products_new.bib}

\end{document}